\newtheorem{thm}{Theorem}[section]
\newtheorem{lem}[thm]{Lemma}
\newtheorem{prop}[thm]{Proposition}
\newtheorem{rem}[thm]{Remark}
\newtheorem{cor}[thm]{Corollary}
\newtheorem{conj}[thm]{Conjecture}
\theoremstyle{definition}
\newtheorem{defi}{Definition}[section]
\newcommand{\ii}{\mathbf{i}}
\newcommand{\N}{\mathbb{N}}
\newcommand{\n}{\mathbf{n}}
\newcommand{\x}{\mathbf{x}}
\DeclareMathOperator{\dotcup}{\dot\bigcup}
\DeclareMathOperator{\FCP}{FCP}
\DeclareMathOperator{\SPP}{SPP}
\DeclareMathOperator{\PP}{PP}
\DeclareMathOperator{\TCPP}{TCPP}
\DeclareMathOperator{\QS}{QS}
\DeclareMathOperator{\qspp}{qspp}
\DeclareMathOperator{\qtcpp}{qtcpp}
\DeclareMathOperator{\qtcspp}{qtcspp}
\newcommand*\pFq[6][8]{%
	\begingroup 
	\pFqmuskip=#1mu\relax
	\mathchardef\normalcomma=\mathcode`,
	\mathcode`\,=\string"8000
	\begingroup\lccode`\~=`\,
	\lowercase{\endgroup\let~}\pFqcomma
	{}_{#2}F_{#3}{\left[\left.\genfrac..{0pt}{}{#4}{#5}\right|#6\right]}%
	\endgroup
}
\newcommand{\pFqcomma}{{\normalcomma}\mskip\pFqmuskip}
\definecolor{darkblue}{rgb}{0.0,0,0.7} 
\newcommand{\darkblue}{\color{darkblue}} 
\newcommand{\defn}[1]{\emph{\darkblue #1}} 
\newcommand{\RightBoundaryColour}[4]{
\begin{scope}[xshift= 1cm*cos(30)*(#1+#2) , yshift=1cm*#3+1cm*sin(30)*(#2-#1)]
\draw [fill=#4] (0,0) -- ({1*cos(30)},{1*sin(30)}) -- ({1*cos(30)},{-1*sin(30)}) -- (0,-1) -- (0,0);
\end{scope}
}
\newcommand{\LeftBoundaryColour}[4]{
\begin{scope}[xshift= 1cm*cos(30)*(#1+#2) , yshift=1cm*#3+1cm*sin(30)*(#2-#1)]
\draw (0,0)[fill=#4] -- ({-1*cos(30)},{1*sin(30)}) -- ({-1*cos(30)},{-1*sin(30)}) -- (0,-1) -- (0,0);
\end{scope}
}
\newcommand{\TopBoundaryColour}[4]{
\begin{scope}[xshift= 1cm*cos(30)*(#1+#2) , yshift=1cm*#3+1cm*sin(30)*(#2-#1)]
\draw (0,0)[fill=#4] -- ({1*cos(30)},{1*sin(30)}) -- (0,1) -- ({-1*cos(30)},{1*sin(30)}) -- (0,0);
\end{scope}
}
\newcommand{\RightBoundaryTriangle}[3]{
\begin{scope}[xshift= 1cm*cos(30)*(#1+#2) , yshift=1cm*#3+1cm*sin(30)*(#2-#1)]
\draw [fill=white] (0,0) -- ({1*cos(30)},{1*sin(30)}) -- ({1*cos(30)},{-1*sin(30)}) -- (0,-1) -- (0,0);
\draw [densely dotted] (0,0) -- ({1*cos(30)},{-1*sin(30)});
\end{scope}
}
\newcommand{\LeftBoundaryTriangle}[3]{
\begin{scope}[xshift= 1cm*cos(30)*(#1+#2) , yshift=1cm*#3+1cm*sin(30)*(#2-#1)]
\draw (0,0)[fill=white] -- ({-1*cos(30)},{1*sin(30)}) -- ({-1*cos(30)},{-1*sin(30)}) -- (0,-1) -- (0,0);
\draw[densely dotted] (0,0) -- ({-1*cos(30)},{-1*sin(30)});
\end{scope}
}
\newcommand{\TopBoundaryTriangle}[3]{
\begin{scope}[xshift= 1cm*cos(30)*(#1+#2) , yshift=1cm*#3+1cm*sin(30)*(#2-#1)]
\draw (0,0)[fill=white] -- ({1*cos(30)},{1*sin(30)}) -- (0,1) -- ({-1*cos(30)},{1*sin(30)}) -- (0,0);
\draw[densely dotted] (0,0) -- (0,1);
\end{scope}
}
\newcommand{\RightBoundaryTriangleDotted}[3]{
\begin{scope}[xshift= 1cm*cos(30)*(#1+#2) , yshift=1cm*#3+1cm*sin(30)*(#2-#1)]
\draw [fill=white, densely dotted] (0,0) -- ({1*cos(30)},{1*sin(30)}) -- ({1*cos(30)},{-1*sin(30)}) -- (0,-1) -- (0,0);
\draw [densely dotted] (0,0) -- ({1*cos(30)},{-1*sin(30)});
\end{scope}
}
\newcommand{\LeftBoundaryTriangleDotted}[3]{
\begin{scope}[xshift= 1cm*cos(30)*(#1+#2) , yshift=1cm*#3+1cm*sin(30)*(#2-#1)]
\draw (0,0)[fill=white, densely dotted] -- ({-1*cos(30)},{1*sin(30)}) -- ({-1*cos(30)},{-1*sin(30)}) -- (0,-1) -- (0,0);
\draw[densely dotted] (0,0) -- ({-1*cos(30)},{-1*sin(30)});
\end{scope}
}
\newcommand{\TopBoundaryTriangleDotted}[3]{
\begin{scope}[xshift= 1cm*cos(30)*(#1+#2) , yshift=1cm*#3+1cm*sin(30)*(#2-#1)]
\draw (0,0)[fill=white, densely dotted] -- ({1*cos(30)},{1*sin(30)}) -- (0,1) -- ({-1*cos(30)},{1*sin(30)}) -- (0,0);
\draw[densely dotted] (0,0) -- (0,1);
\end{scope}
}
\newcommand{\RightBoundaryMatching}[3]{
\begin{scope}[xshift= 1cm*cos(30)*(#1+#2) , yshift=1cm*#3+1cm*sin(30)*(#2-#1)]
\draw  ({1/3*cos(30)},{-1/3*(1+1*sin(30))}) -- ({2/3*cos(30)},0);
\filldraw ({1/3*cos(30)},{-1/3*(1+1*sin(30))}) circle (2pt);
\filldraw ({2/3*cos(30)},0) circle (2pt);
\end{scope}
}
\newcommand{\LeftBoundaryMatching}[3]{
\begin{scope}[xshift= 1cm*cos(30)*(#1+#2) , yshift=1cm*#3+1cm*sin(30)*(#2-#1)]
\draw ({-2/3*cos(30)},0) -- ({-1/3*cos(30)},{-1/3*(1+1*sin(30))});
\filldraw ({-2/3*cos(30)},0) circle (2pt);
\filldraw ({-1/3*cos(30)},{-1/3*(1+1*sin(30))}) circle (2pt);
\end{scope}
}
\newcommand{\TopBoundaryMatching}[3]{
\begin{scope}[xshift= 1cm*cos(30)*(#1+#2) , yshift=1cm*#3+1cm*sin(30)*(#2-#1)]
\draw ({1/3*cos(30)},{1/3*(1+1*sin(30))}) -- ({-1/3*cos(30)},{1/3*(1+1*sin(30))});
\filldraw ({1/3*cos(30)},{1/3*(1+1*sin(30))}) circle (2pt);
\filldraw ({-1/3*cos(30)},{1/3*(1+1*sin(30))}) circle (2pt);
\end{scope}
}
\newcommand{\RightBoundary}[4]{
\ifnum #4 =1
	\RightBoundaryColour{#1}{#2}{#3}{green!80!black}
\else
	\ifnum #4 =2
		\RightBoundaryColour{#1}{#2}{#3}{darkgray}
	\else
		\ifnum #4=3
			\RightBoundaryColour{#1}{#2}{#3}{white}	
		\else
			\ifnum #4=4
				\RightBoundaryMatching{#1}{#2}{#3}	
			\else
				\ifnum #4=5
					\RightBoundaryTriangle{#1}{#2}{#3}
				\else
					\ifnum #4=6
						\RightBoundaryTriangleDotted{#1}{#2}{#3}
					\else
						\ifnum #4=7
							\RightBoundaryColour{#1}{#2}{#3}{white}
							\begin{scope}[xshift= 1cm*cos(30)*(#1+#2) , yshift=1cm*#3+1cm*sin(30)*(#2-#1)]
								\draw[cyan, line width=2pt]  ({1/2*cos(30)},{1/2*sin(30)}) -- ({1/2*cos(30)},{-1+1/2*sin(30)});
							\end{scope}
						\else
							\ifnum #4=8
								\RightBoundaryColour{#1}{#2}{#3}{white}
							\else
								\ifnum #4=9
									\RightBoundaryColour{#1}{#2}{#3}{white}
									\begin{scope}[xshift= 1cm*cos(30)*(#1+#2) , yshift=1cm*#3+1cm*sin(30)*(#2-#1)]
										\draw[red, dashed, line width=2pt]  ({1/2*cos(30)},{1/2*sin(30)}) -- ({1/2*cos(30)},{-1+1/2*sin(30)});
									\end{scope}
								\else
									\RightBoundaryColour{#1}{#2}{#3}{white}
								\fi
							\fi
						\fi
					\fi
				\fi
			\fi
		\fi
	\fi
\fi
}
\newcommand{\LeftBoundary}[4]{
\ifnum #4 =1
	\LeftBoundaryColour{#1}{#2}{#3}{blue}
\else
	\ifnum #4 =2
		\LeftBoundaryColour{#1}{#2}{#3}{lightgray}
	\else
		\ifnum #4=3
			\LeftBoundaryColour{#1}{#2}{#3}{white}	
		\else
			\ifnum #4=4
				\LeftBoundaryMatching{#1}{#2}{#3}	
			\else
				\ifnum #4=5
					\LeftBoundaryTriangle{#1}{#2}{#3}
				\else
					\ifnum #4=6
						\LeftBoundaryTriangleDotted{#1}{#2}{#3}
					\else 
						\ifnum #4=7
							\LeftBoundaryColour{#1}{#2}{#3}{white}	
						\else
							\ifnum #4=8
								\LeftBoundaryColour{#1}{#2}{#3}{white}	
								\begin{scope}[xshift= 1cm*cos(30)*(#1+#2) , yshift=1cm*#3+1cm*sin(30)*(#2-#1)]
									\draw[cyan, line width=2pt]  ({-1/2*cos(30)},{1/2*sin(30)}) -- ({-1/2*cos(30)},{-1+1/2*sin(30)});
								\end{scope}
							\else
								\ifnum #4=9
									\LeftBoundaryColour{#1}{#2}{#3}{white}
								\else
									\LeftBoundaryColour{#1}{#2}{#3}{white}	
									\begin{scope}[xshift= 1cm*cos(30)*(#1+#2) , yshift=1cm*#3+1cm*sin(30)*(#2-#1)]
										\draw[red, dashed, line width=2pt]  ({-1/2*cos(30)},{1/2*sin(30)}) -- ({-1/2*cos(30)},{-1+1/2*sin(30)});
									\end{scope}
								\fi
							\fi
						\fi
					\fi
				\fi
			\fi
		\fi
	\fi
\fi
}
\newcommand{\TopBoundary}[4]{
\ifnum #4 =1
	\TopBoundaryColour{#1}{#2}{#3}{red}
\else
	\ifnum #4 =2
		\TopBoundaryColour{#1}{#2}{#3}{white}
	\else
		\ifnum #4=3
			\TopBoundaryColour{#1}{#2}{#3}{white}	
		\else
			\ifnum #4=4
				\TopBoundaryMatching{#1}{#2}{#3}	
			\else
				\ifnum #4=5
					\TopBoundaryTriangle{#1}{#2}{#3}
				\else
					\ifnum #4=6
						\TopBoundaryTriangleDotted{#1}{#2}{#3}
					\else
						\ifnum #4=7
							\TopBoundaryColour{#1}{#2}{#3}{white}
							\begin{scope}[xshift= 1cm*cos(30)*(#1+#2) , yshift=1cm*#3+1cm*sin(30)*(#2-#1)]
								\draw[cyan, line width=2pt]  ({-1/2*cos(30)},{1-1/2*sin(30)}) -- ({1/2*cos(30)},{1/2*sin(30)});
							\end{scope}
						\else
							\ifnum #4=8
								\TopBoundaryColour{#1}{#2}{#3}{white}
								\begin{scope}[xshift= 1cm*cos(30)*(#1+#2) , yshift=1cm*#3+1cm*sin(30)*(#2-#1)]
									\draw[cyan, line width=2pt]  ({-1/2*cos(30)},{1/2*sin(30)}) -- ({1/2*cos(30)},{1-1/2*sin(30)});
								\end{scope}
							\else
								\ifnum #4=9
									\TopBoundaryColour{#1}{#2}{#3}{white}
									\begin{scope}[xshift= 1cm*cos(30)*(#1+#2) , yshift=1cm*#3+1cm*sin(30)*(#2-#1)]
										\draw[red, dashed, line width=2pt]  ({-1/2*cos(30)},{1-1/2*sin(30)}) -- ({1/2*cos(30)},{1/2*sin(30)});
									\end{scope}
								\else
									\TopBoundaryColour{#1}{#2}{#3}{white}
									\begin{scope}[xshift= 1cm*cos(30)*(#1+#2) , yshift=1cm*#3+1cm*sin(30)*(#2-#1)]
										\draw[red, dashed, line width=2pt]  ({-1/2*cos(30)},{1/2*sin(30)}) -- ({1/2*cos(30)},{1-1/2*sin(30)});
									\end{scope}
								\fi
							\fi
						\fi
					\fi
				\fi
			\fi
		\fi
	\fi
\fi
}
	\newcounter{x}
	\newcounter{y}
	\newcounter{z}
	\newcounter{help}
\newcommand{\PlanePartitionColour}[2]{
	\foreach \m [count=\y] in {#1}{		 								
		\foreach \z [count=\x] in \m{									
			\setcounter{x}{\x}											
			\ifnum \z > 0												
				\TopBoundary{\x}{-\y}{\z}{#2}								
			\fi
			
			\ifnum \x > 1												
				\ifnum \z < \thez										
					\setcounter{help}{\z}
					\addtocounter{help}{1}								
					\foreach \zz in {\thehelp,...,\thez}{					
						\RightBoundary{(\x -1)}{(-\y)}{\zz}{#2}				
					}														
				\fi
			\fi
			\ifnum \y > 1											
				\foreach \mm [count=\yy] in {#1}{					
				\ifnum \yy = \they									
				\foreach \zz [count=\xx] in \mm{					
				\ifnum \xx = \x										
				\ifnum \zz > \z										
					\setcounter{help}{\zz}									
					\addtocounter{help}{-1}	
					\foreach \zzz in {\z,...,\thehelp}{
						\LeftBoundary{(\x -1)}{(1-\yy)}{\zzz}{#2}		
					}
				\fi
				\fi
				}
				\fi
				}
			\fi		
			\setcounter{z}{\z}										
		}										
		\ifnum \thez >0												
			\foreach \z in {1,...,\thez}{
				\RightBoundary{\thex}{-\y}{\z}{#2}						
			}
		\fi
		\setcounter{y}{\y}
	}											

	\foreach \mm [count=\yy] in {#1}{								
	\ifnum \yy = \they
	\foreach \zz [count=\xx] in \mm{
	\ifnum \zz >0
		\foreach \zzz in {1,...,\zz}{
			\LeftBoundary{(\xx -1)}{(1-\yy)}{(\zzz-1)}{#2}			
		}
	\fi
	}
	\fi
	}
	
}
\newcommand{\TilingBox}[4]{
	\foreach \a in {1,...,#2}{
		\foreach \b in {1,...,#3}{
			\RightBoundary{0}{-\a}{\b -1}{#4}
		}
	}
	\foreach \a in {1,...,#1}{
		\foreach \b in {1,...,#3}{
			\LeftBoundary{\a}{0}{\b}{#4}
		}
	}
	\foreach \a in {1,...,#1}{
		\foreach \b in {1,...,#2}{
			\TopBoundary{\a}{-\b}{0}{#4}
		}
	}
}
\newcommand{\PlanePartition}[1]{\PlanePartitionColour{#1}{1}}
\newcommand{\PlanePartitionWhite}[1]{\PlanePartitionColour{#1}{3}}
\numberwithin{equation}{section}
\title{Fully complementary higher dimensional partitions}
\author{Florian Schreier-Aigner}
\address{University of Vienna, Austria}
\urladdr{\url{https://homepage.univie.ac.at/florian.schreier-aigner}}
\thanks{Florian Schreier-Aigner acknowledges the financial support from the Austrian Science Foundation FWF, grant J 4387}
\begin{document}

\begin{abstract}
We introduce a symmetry class for higher dimensional partitions - \emph{fully complementary higher dimensional partitions} (FCPs) - and prove a formula for their generating function. By studying symmetry classes of FCPs in dimension 2, we define variations of the classical symmetry classes for plane partitions. As a by-product we obtain conjectures for three new symmetry classes of plane partitions and prove that another new symmetry class, namely \emph{quasi transpose complementary plane partitions} are equinumerous to symmetric plane partitions.
\end{abstract}

\maketitle

\section{Introduction}
\label{sec: intro}

A \defn{plane partition} $\pi$ is an array $(\pi_{i,j})$ of non-negative integers with all but finitely many entries equal to $0$, which is weakly decreasing along rows and columns, i.e., $\pi_{i,j} \geq \pi_{i+1,j}$ and $\pi_{i,j} \geq \pi_{i,j+1}$; see Figure~\ref{fig: PP} (left) for an example. MacMahon \cite{MacMahon97} introduced them at the end of the 19th century as two dimensional generalisations of ordinary partitions and proved in \cite{MacMahon16} two enumeration results:
He showed that the generating function of plane partitions is given by
\begin{equation}
\sum_{\pi} q^{|\pi|} = \prod_{i\geq 1} \frac{1}{(1-q^i)^i},
\end{equation} 
where the sum is over all plane partitions and $|\pi|$ is defined as the sum of the entries of $\pi$.
A plane partition $\pi$ is said to be \defn{contained in an $(a,b,c)$-box} if the entries of $\pi$ are at most $c$ and $\pi_{i,j} \neq 0$ implies $i\leq a$ and $j \leq b$. The plane partition in Figure~\ref{fig: PP} is contained in a $(3,4,4)$-box or any box of larger size. MacMahon showed that the weighted enumeration of plane partitions inside an $(a,b,c)$-box is given by
\begin{equation}
\sum_{\pi} q^{|\pi|} = \prod_{i=1}^a\prod_{j=1}^b\prod_{k=1}^c \frac{1-q^{i+j+k-1}}{1-q^{i+j+k-2}},
\end{equation}
where the sum is over all plane partitions contained in an $(a,b,c)$-box. 
\medskip

\begin{figure}[h]
\begin{center}
\begin{tikzpicture}[scale=0.7]
\node at (0,2) {$4$};
\node at (1,2) {$3$};
\node at (2,2) {$3$};
\node at (3,2) {$1$};
\node at (0,1) {$4$};
\node at (1,1) {$2$};
\node at (2,1) {$1$};
\node at (0,0) {$2$};

\begin{scope}[xshift=8cm, scale=.8]
\PlanePartition{{4,3,3,1},{4,2,1,0},{2,0,0}}
\end{scope}

\begin{scope}[xshift=16cm, scale=.8]
\TilingBox{4}{3}{4}{3}
\PlanePartitionWhite{{4,3,3,1},{4,2,1,0},{2,0,0}}
\end{scope}
\end{tikzpicture}
\caption{\label{fig: PP} A plane partition contained in a $(3,4,4)$-box on the left, its graphical representation as stacks of unit cubes (middle) and the associated lozenge tiling (right).}
\end{center}
\end{figure}
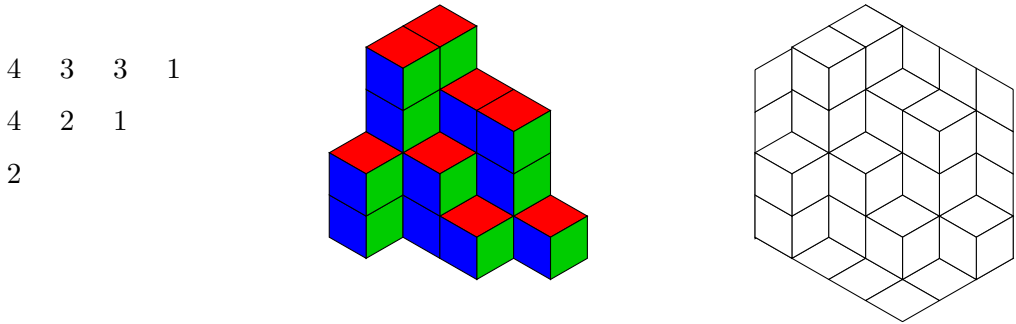

While plane partitions were already introduced at the end of the 19th century, they came into the focus of the combinatorics community mainly in the second half of the last century. One major goal was to prove the enumeration formulas for the ten symmetry classes of plane partitions. These classes are defined via combinations of the three operations \defn{reflection}, \defn{rotation} and \defn{complementation} which are best defined by viewing plane partitions as lozenge tilings: First, we represent a plane partition $\pi$ as stacks of unit cubes by placing $\pi_{i,j}$ unit cubes at the position $(i,j)$, see Figure~\ref{fig: PP} (middle). By further displaying the shape of the $(a,b,c)$-box in which we regard the plane partition in and forgetting the shading of the cubes, we obtain a lozenge tiling of a hexagon with side lengths $a,b,c,a,b,c$, see Figure~\ref{fig: PP} (right). Interestingly, this was first observed by David and Tomei \cite{DavidTomei89} in 1989. The operation \defn{reflection} is defined as vertical reflection of the lozenge tiling, \defn{rotation} as rotation by $120$ degrees and \defn{completion} as rotation by $180$ degree. While MacMahon \cite{MacMahon99} already considered plane partitions invariant under reflection, the operation completion was first described by Mills, Robbins and Rumsey \cite{MillsRobbinsRumsey86} in 1986. A systematic study of the ten symmetry classes which are defined through these operations was initiated by Stanley 
 \cite{Stanley86a, Stanley86} and finished in 2011 by Koutschan, Kauers and Zeilberger \cite{KoutschanKauersZeilberger11}. For a more detailed overview see \cite{Krattenthaler16}.
\medskip

Already in 1916, MacMahon \cite{MacMahon16} introduced a further generalisation of partitions to arbitrary dimension, namely \defn{higher-dimensional partitions}. A \defn{$d$-dimensional partition} $\pi$ is an array $(\pi_{i_1,\ldots,i_d})$ of non-negative integers with all but finitely many entries equal to $0$, such that $\pi_{\ii} \geq \pi_{\ii+e_k}$ for all indices $\ii=(i_1,\ldots,i_d)$ and $1 \leq k \leq d$, where $e_k$ denotes the $k$-th unit vector.  We say that $\pi$ is contained in an $(n_1,\ldots,n_{d+1})$-box if all entries are at most $n_{d+1}$ and $\pi_{\ii}>0$ implies that $i_j \leq n_j$ for all $1 \leq j \leq d$. 
Contrary to dimension $1$ (partitions) and dimension $2$ (plane partitions), there are hardly any results known for higher-dimensional partitions in dimension $3$ or higher. MacMahon conjectured a generating formula for each dimension $d$ but it was disproved by Atkin, Bratley, Macdonald and McKay \cite{AtkinBratleyMacdonaldMcKay67} in 1967; see also \cite{Knuth70}. Only recently the first enumeration result for higher-dimensional partitions was presented by Amanov and Yeliussizov \cite{AmanovYeliussizov20Arxiv}. They were able to ``correct'' MacMahons formula and showed that
\begin{equation}
\sum_{\pi} t^{\text{cor}(\pi)} q^{|\pi|_{\text{ch}}} = \prod_{i \geq 1} (1-t q^i)^{-\binom{i+d-2}{d-1}},
\end{equation} 
where the sum is over all $d$-dimensional partitions, and $\text{cor}$ and $|\cdot|_{\text{ch}}$ are certain statistics defined in \cite[Section 4 and 5]{AmanovYeliussizov20Arxiv}. \medskip

In this paper we introduce a new symmetry class for plane partitions, namely \defn{quarter complementary plane partitions} (QCPPs), which can be generalised immediately to higher-dimensional partitions. Instead of presenting the definition for QCPPs (it follows from the corresponding definition for higher-dimensional partitions in Section~\ref{sec: gen fct}) we aim to convey the geometric intuition of this symmetry class next.

\begin{figure}
\begin{center}
\begin{tikzpicture}
\draw (0,0) -- (0,2) -- (2,2) -- (2,0) -- (0,0);
\draw (2,0) -- ({2+1.3*cos(30)},{1.3*sin(30)}) -- ({2+1.3*cos(30)},{2+1.3*sin(30)}) -- (2,2);
\draw (0,2) -- ({1.3*cos(30)},{2+1.3*sin(30)}) -- ({2+1.3*cos(30)},{2+1.3*sin(30)});
\draw[dashed] ({1.3*cos(30)},{1.3*sin(30)}) -- (0,0);
\draw[dashed] ({1.3*cos(30)},{1.3*sin(30)}) -- ({2+1.3*cos(30)},{1.3*sin(30)});
\draw[dashed] ({1.3*cos(30)},{1.3*sin(30)}) -- ({1.3*cos(30)},{2+1.3*sin(30)});
\draw[->, orange, line width=1.5pt] ({1.3*cos(30)},{1.3*sin(30)}) -- ({.75+1.3*cos(30)},{1.3*sin(30)});
\draw[->, cyan, line width=1.75pt] ({1.3*cos(30)},{1.3*sin(30)}) -- ({1.3*cos(30)},{1+1.3*sin(30)});
\draw[->, teal!75!black, line width=1.75pt] ({1.3*cos(30)},{1.3*sin(30)})  -- ({.3*cos(30)},{.3*sin(30)});

\draw[->, orange, line width=1.5pt] (2,2) -- (1.25,2);
\draw[->, cyan, line width=1.75pt] (2,2) -- (2,1);
\draw[->, teal!75!black, line width=1.75pt] (2,2)  -- ({2+cos(30)},{2+sin(30)});

\begin{scope}[xshift=5cm]

\draw (.6,0) -- (1.4,0);
\draw (.6,2) -- (1.4,2);
\draw ({.5+1.5*cos(30)},{2+1.3*sin(30)}) -- ({1.4+1.3*cos(30)},{2+1.3*sin(30)});
\draw[dashed] ({.5+1.5*cos(30)},{1.3*sin(30)}) -- ({1.4+1.3*cos(30)},{1.3*sin(30)});

\draw (0,.3) -- (0,1.8);
\draw (2,.3) -- (2,1.8);
\draw ({2+1.3*cos(30)},{.3+1.3*sin(30)}) -- ({2+1.3*cos(30)},{1.8+1.3*sin(30)});
\draw[dashed] ({1.3*cos(30)},{.3+1.3*sin(30)}) -- ({1.3*cos(30)},{1.8+1.3*sin(30)});

\draw ({2+.9*cos(30)},{.9*sin(30)}) -- ({2+.5*cos(30)},{.5*sin(30)});
\draw ({.9*cos(30)},{2+.9*sin(30)}) -- ({.5*cos(30)},{2+.5*sin(30)});
\draw ({2+.9*cos(30)},{2+.9*sin(30)}) -- ({2+.5*cos(30)},{2+.5*sin(30)});

\draw[dashed] ({.9*cos(30)},{.9*sin(30)}) -- ({.5*cos(30)},{.5*sin(30)});
\node at (0,0) {$(a,1,1)$};
\node at (2,0) {$(a,b,1)$};
\node at (0,2) {$(a,1,c)$};
\node at (2,2) {$(a,b,c)$};
\node at ({1.3*cos(30)},{1.3*sin(30)}) {$(1,1,1)$};
\node at ({1.3*cos(30)},{2+1.3*sin(30)}) {$(1,1,c)$};
\node at ({2+1.3*cos(30)},{1.3*sin(30)}) {$(1,b,1)$};
\node at ({2+1.3*cos(30)},{2+1.3*sin(30)}) {$(1,b,c)$};
\end{scope}

\begin{scope}[xshift=10cm]
\draw (0,0) -- (0,2) -- (2,2) -- (2,0) -- (0,0);
\draw (2,0) -- ({2+1.3*cos(30)},{1.3*sin(30)}) -- ({2+1.3*cos(30)},{2+1.3*sin(30)}) -- (2,2);
\draw (0,2) -- ({1.3*cos(30)},{2+1.3*sin(30)}) -- ({2+1.3*cos(30)},{2+1.3*sin(30)});
\draw[dashed] ({1.3*cos(30)},{1.3*sin(30)}) -- (0,0);
\draw[dashed] ({1.3*cos(30)},{1.3*sin(30)}) -- ({2+1.3*cos(30)},{1.3*sin(30)});
\draw[dashed] ({1.3*cos(30)},{1.3*sin(30)}) -- ({1.3*cos(30)},{2+1.3*sin(30)});

\draw[->, orange, line width=1.5pt] ({1.3*cos(30)},{1.3*sin(30)}) -- ({.75+1.3*cos(30)},{1.3*sin(30)});
\draw[->, cyan, line width=1.75pt] ({1.3*cos(30)},{1.3*sin(30)}) -- ({1.3*cos(30)},{1+1.3*sin(30)});
\draw[->, teal!75!black, line width=1.75pt] ({1.3*cos(30)},{1.3*sin(30)})  -- ({.3*cos(30)},{.3*sin(30)});

\draw[->, orange, line width=1.5pt] ({2+1.3*cos(30)},{2+1.3*sin(30)}) -- ({1.25+1.3*cos(30)},{2+1.3*sin(30)});
\draw[->, cyan, line width=1.75pt] ({2+1.3*cos(30)},{2+1.3*sin(30)}) -- ({2+1.3*cos(30)},{1+1.3*sin(30)});
\draw[->, teal!75!black, line width=1.75pt] ({2+1.3*cos(30)},{2+1.3*sin(30)})  -- ({2+.3*cos(30)},{2+.3*sin(30)});

\draw[->, orange, line width=1.5pt] (0,2) -- (.75,2);
\draw[->, cyan, line width=1.75pt] (0,2)  -- (0,1);
\draw[->, teal!75!black, line width=1.75pt] (0,2)  -- ({cos(30)},{2+sin(30)});

\draw[->, orange, line width=1.5pt] (2,0) -- (1.25,0);
\draw[->, cyan, line width=1.75pt] (2,0)  -- (2,1);
\draw[->, teal!75!black, line width=1.75pt] (2,0)  -- ({2+cos(30)},{sin(30)});
\end{scope}
\end{tikzpicture}
\caption{\label{fig: QCPP intuition} The labels of the corners of an $(a,b,c)$-box (middle), the corners where a copy of $\pi$ is placed for self-complementary plane partitions (left), and the corners where copies of $\pi$ are placed for quarter complementary plane partitions (right). The colour and lengths of the arrows indicate the orientation of the copies.}
\end{center}
\end{figure}
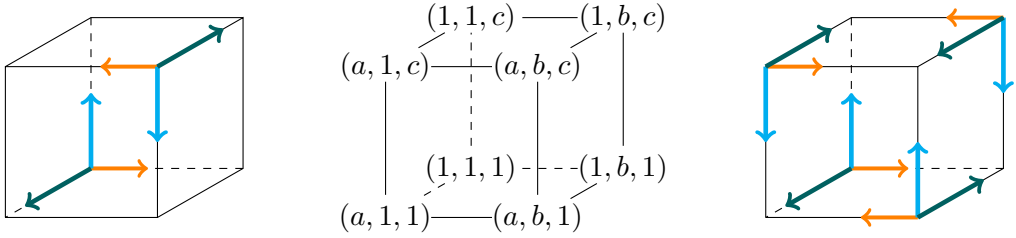

Let $\pi=(\pi_{i,j})$ be a plane partition inside an $(a,b,c)$-box and define by \linebreak $\pi^\prime=(\pi_{a+1-i,b+1-j})_{i,j}$ the ``dual partition'' of $\pi$ inside the $(a,b,c)$-box. Geometrically, we think of the dual partition as stacks of unit cubes hanging from the ceiling of the $(a,b,c)$-box instead of standing at its floor, where the first stack is positioned at the corner with coordinates $(a,b,c)$ instead of the corner with coordinates $(1,1,1)$, see Figure~\ref{fig: QCPP intuition} (left) for a sketch. It is not difficult to see, compare for example with \cite[Section 6]{Krattenthaler16}, that $\pi$ is \defn{self-complementary} if $\pi$ and $\pi^\prime$ fill the $(a,b,c)$-box without overlap when regarded as stacks of unit cubes. In the array perspective this means $\pi_{i,j}+\pi_{i,j}^\prime = c$ for all $1 \leq i \leq a$ and $1 \leq j \leq b$. We can now generalise this idea. Let $C$ be a set of corners of the $(a,b,c)$-box. We define a partition $\pi$ to be \defn{$C$-complementary} if we can fill the $(a,b,c)$-box without overlap by copies of $\pi$ placed at the corners of $C$ similar to before, i.e., if a copy of $\pi$ is placed at a corner of the form $(*,*,c)$ then its stacks of unit cubes hang from the ceiling of the box instead of standing on its floor. It is immediate that such a $\pi$ can only exist if $|C|\in \{1,2,4,8\}$. We can ignore the cases $|C|=1$ and $|C|=8$ since they are trivial. 

If $|C|=2$ and the two corners are on a common face of the box, it is not difficult to see that $\pi$ is determined along the direction orthogonal to this face. Up to rotation there is only one configuration of the two corners such that they do not share a common face: $C=\{(1,1,1),(a,b,c)\}$. The $C$-complementary plane partition in this case are self-complementary plane partitions.
Similarly, if two corners in $C$ are the endpoints of an edge of the box, then $\pi$ is determined along the direction of this edge. The only configuration of four corners for which no pair of them are endpoints of an edge is up to rotation $C=\{(1,1,1),(a,b,1),(1,b,c),(a,1,c)\}$, see Figure~\ref{fig: QCPP intuition} (right) for a sketch. We call the $C$-complementary plane partitions in this case \defn{quarter complementary plane partitions}.
\medskip

In Section~\ref{sec: fcps} we generalise this geometric approach to higher dimensions by using \defn{higher dimensional Ferrers diagrams}. Translating the obtained criteria on Ferrers diagrams back to the array description for higher-dimensional partitions, we obtain in Lemma~\ref{lem: fully comp partitions} the definition of \defn{$d$-dimensional fully complementary partitions} (FCPs). In Proposition~\ref{prop: rec struct} we describe the recursive structure of FCPs which implies immediately our main result.

\begin{thm}
\label{thm: main}
Let $\x=(x_1,\ldots,x_{d+1})$, $\n=(n_1,\ldots,n_{d+1}) \in \N_{>0}^{d+1}$ and denote by $\FCP(\n)$ the set of fully complementary partitions inside a $(2n_1,\ldots,2n_{d+1})$-box. Then
\begin{equation}
\label{eq: main}
\sum_{\mathbf{n} \in \N^{d+1}} |\FCP(\n)| \x^{\n} = \frac{\prod\limits_{i=1}^{d+1}x_i\left(
\sum\limits_{i=1}^{d+1}\left(x_i^{-1}+d x_i\right)-\sum\limits_{1 \leq i, j \leq d+1}x_ix_j^{-1}
\right)
}{\left(1-\sum\limits_{i=1}^{d+1} x_i\right)\prod\limits_{i=1}^{d+1}(1-x_i)}.
\end{equation}
\end{thm}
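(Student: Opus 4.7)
The plan is to exploit the fact that the excerpt announces Theorem~\ref{thm: main} as an immediate consequence of Proposition~\ref{prop: rec struct}, which describes the recursive structure of FCPs. So the proof reduces to converting that recursion into a functional equation for the generating function
\[
F(\x) := \sum_{\n \in \N_{>0}^{d+1}} |\FCP(\n)|\, \x^\n
\]
and reading off the closed form. The shape of the denominator $(1-\sum_i x_i)\prod_i(1-x_i)$ in \eqref{eq: main} is the main hint: it suggests that the recursion should identify every (sufficiently large) FCP uniquely with a ``base'' FCP together with two commuting kinds of extensions, an ordered sequence of ``diagonal'' moves each of which boosts one coordinate $n_k$ by $1$ (contributing the $1/(1-\sum_i x_i)$ factor, since each step chooses one of $d+1$ coordinates), and an unordered family of per-axis moves $n_i \mapsto n_i + k_i$ (contributing the $1/\prod_i(1-x_i)$ factor).

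First, I would record Proposition~\ref{prop: rec struct} in the explicit form that each $\pi \in \FCP(\n)$ decomposes uniquely as $(\pi_0; (k_1,\ldots,k_{d+1}); (j_1,\ldots,j_m))$, where $\pi_0$ is a base FCP and the two sequences realise the two extensions above, with the box parameter $\n$ being additively tracked. This is exactly the combinatorial content one needs in order to claim
\[
F(\x) \;=\; \frac{N(\x)}{\bigl(1-\sum_{i=1}^{d+1} x_i\bigr)\prod_{i=1}^{d+1}(1-x_i)},
\]
with $N(\x)$ the generating polynomial of the finitely many base FCPs.

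Second, having reduced to a polynomial identity, I would check that
\[
N(\x) \;=\; \prod_{i=1}^{d+1} x_i \left(\sum_{i=1}^{d+1}\bigl(x_i^{-1}+d\,x_i\bigr)-\sum_{1 \leq i,j \leq d+1} x_i x_j^{-1}\right).
\]
Clearing the overall $\prod_i x_i$, the right-hand side is a finite signed sum of monomials $\prod_{k\neq i} x_k$, $x_i\prod_k x_k$ and $-\,x_i\prod_{k\neq j} x_k$; each such monomial should correspond to a concrete list of minimal FCPs enumerated case-by-case from the definition. The positive terms $\prod_{k\neq i}x_k$ and $d\,x_i\prod_k x_k$ should count degenerate base FCPs living on a facet or along an axis of the $(2n_1,\ldots,2n_{d+1})$-box, while the subtracted terms $x_i\prod_{k\neq j}x_k$ implement an inclusion–exclusion removing the base cases double-counted between two facets.

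The routine part is the algebraic manipulation of the functional equation; the main obstacle is this last bookkeeping step, namely cataloguing the base FCPs and matching their signed count to the explicit polynomial $N(\x)$. Once that matching is verified, dividing by $(1-\sum_i x_i)\prod_i(1-x_i)$ yields \eqref{eq: main}.
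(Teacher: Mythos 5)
Your instinct to reduce everything to Proposition~\ref{prop: rec struct} is right --- that is exactly the paper's route --- but the decomposition you posit in your first step is not what that proposition provides, and it is false as stated. The proposition says that for $\n\in\N_{>0}^{d+1}$ one has $\FCP(\n)=\dotcup_{1\le k\le d+1}\varphi_k\big(\FCP(\n-e_k)\big)$; iterating it, every FCP is \emph{uniquely} an ordered word $\varphi_{i_m}\circ\cdots\circ\varphi_{i_1}$ applied to an empty array in a box with exactly one zero side length (this is Remark~\ref{rem: FCP to paths}). There is no canonical splitting of this word into an unordered family of per-axis extensions together with an ordered sequence of ``diagonal'' moves: the maps $\varphi_k$ do not commute in the sense you need, and the requirement that all coordinates be positive after the first step ties the choice of base to the first letter of the word. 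More decisively, your plan to identify $N(\x)$ with ``the generating polynomial of the finitely many base FCPs'' cannot work even in principle: expanding the numerator of \eqref{eq: main} gives coefficient $-(d+1)$ on $\prod_k x_k$ and coefficient $-1$ on each monomial $x_i^2\prod_{k\neq i,j}x_k$ with $i\neq j$, so $N(\x)$ is genuinely signed and is not the generating polynomial of any set of objects. You acknowledge the signs via an appeal to inclusion--exclusion, but that matching is precisely the unproven core of your argument, and you defer it as ``bookkeeping''; as written, the proposal is a plan whose central verification is missing.

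The paper's actual proof sidesteps all of this by summing the recursion rather than iterating it. Writing $Z(\x)=\sum_{\n}|\FCP(\n)|\x^{\n}$ and $Z_j(\x)=\prod_{i\neq j}\frac{x_i}{1-x_i}$ for the boundary series (each box with $j$-th side zero contributes exactly one empty array), and noting that applying $\varphi_i$ shifts $\x^{\n-e_i}$ to $\x^{\n}$, Proposition~\ref{prop: rec struct} yields the linear functional equation
\[
Z(\x)\;=\;Z(\x)\sum_{i=1}^{d+1}x_i\;+\;\sum_{j=1}^{d+1}Z_j(\x)\Bigl(1-\sum_{i\neq j}x_i\Bigr),
\]
which one solves for $Z(\x)$; the signed numerator then emerges purely algebraically upon clearing the denominators $\prod_i(1-x_i)$, not from a signed census of base objects. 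If you want to keep a combinatorial reading of the two denominator factors, the correct version is: $\prod_{i\neq j}x_i/(1-x_i)$ generates the boundary starting points (empty arrays), and $x_j/\bigl(1-\sum_i x_i\bigr)$ generates the subsequent ordered sequence of $\varphi$-moves whose first step is forced to be $e_j$; summing over $j$ and simplifying recovers the stated numerator.
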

The recursive structure of FCPs can be further used to construct a bijection between $d$-dimensional FCPs and lattice paths in the positive $d+1$-dimensional orthant starting from any integer point in the interior of its $d$-dimensional boundary.
\bigskip

In Section~\ref{sec: dim 2} we consider the ``typical'' symmetry classes for plane partitions restricted to $2$-dimensional FCPs, i.e. quarter complementary plane partitions. It turns out that there exists at most one symmetric QCPP in an $(a,b,c)$-box and that a QCPP can be neither cyclically symmetric nor transpose-complementary. By introducing two variations, namely \defn{quasi symmetric} and \defn{quasi transpose-complementary}, we are able to show enumerative results for the corresponding symmetry classes and combinations thereof for QCPPs, see Proposition~\ref{prop: QSQCPPs} to Proposition~\ref{prop: TC}. It is an immediate question if the enumeration of plane partitions under these variations of symmetry classes have closed expressions. In Section~\ref{sec: QS pps} we consider these new symmetry classes and similar generalisations. We present three conjectures, the corresponding data generated in computer experiments is given in Appendix~\ref{sec: data}, and the proof of the next result.

\begin{thm}
\label{thm: qtc pps} A plane partition $\pi$ inside an $(n,n,c)$-box is called \defn{quasi transpose-complementary} if $\pi_{i,j}+\pi_{n+1-j,n+1-i}=c$ holds for all $1 \leq i,j \leq n$ with $i \neq n+1-j$.
The number of quasi transpose-complementary plane partitions inside an $(n,n,c)$-box is equal to the number of symmetric plane partitions inside an $(n,n,c)$-box.
\end{thm}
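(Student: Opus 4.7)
The plan is to exhibit an explicit correspondence between QTCPPs and SPPs in an $(n,n,c)$-box, working through the fundamental domains of each symmetry class. Both fundamental domains have $n(n+1)/2$ entries: the SPP one is the upper triangle $\{(i,j) : i \leq j\}$, and the QTCPP one is the upper anti-triangle $\{(i,j) : i+j \leq n+1\}$. These two regions are naturally identified by the column reversal $(i,j) \mapsto (i, n+1-j)$, which swaps the main diagonal with the anti-diagonal; however, this identification flips row-monotonicity, so it cannot directly preserve the plane partition structure and a more refined argument is needed.

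First, I would rewrite both symmetry classes in terms of inequalities on their fundamental domains only. For SPPs, this yields a weakly row- and column-decreasing array on the upper triangle with values in $\{0, \ldots, c\}$ and with diagonal values forming a weakly decreasing sequence bounded by $c$. For QTCPPs, the plane partition inequalities together with the relation $\pi_{i,j} + \pi_{n+1-j, n+1-i}=c$ produce a similar set of inequalities on the upper anti-triangle: standard monotonicity strictly inside, together with boundary conditions on the anti-diagonal entries $a_i := \pi_{i, n+1-i}$ of the form $a_i + \pi_{i-1, n+1-i} \geq c$ and $a_i + \pi_{i, n-i} \geq c$, encoding that each $a_i$ is large enough to glue monotonically with the complemented values below the anti-diagonal. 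The ``quasi'' relaxation is precisely what allows $n$ independent degrees of freedom on the $a_i$, paralleling the $n$ diagonal values of an SPP.

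Second, I would encode both systems as families of non-intersecting lattice paths via the LGV correspondence applied to each fundamental domain, obtaining in each case a triangular lattice path region. The bijection is then established either directly on path systems, by a reflection sending the upper triangle to the upper anti-triangle combined with a complementation $k \mapsto c-k$ to correct the reversed monotonicity, or by identifying the two LGV determinants after elementary row and column operations. The main obstacle will be verifying that the mixed boundary conditions on the anti-diagonal of QTCPPs correspond exactly, under this reflect-and-complement map, to the diagonal boundary conditions of SPPs: the two parallel statements ``$a_i$ plus upper-left neighbor is $\geq c$'' and ``$\sigma_{i,i}$ is weakly bounded by the entry above and left'' must be shown to be equivalent data after the transformation, which is where the quasi-relaxation plays its decisive role.
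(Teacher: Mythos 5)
Your identification of the fundamental domains is correct as far as it goes: restricted to the anti-triangle $\{(i,j): i+j\le n+1\}$, the QTCPP conditions are exactly ordinary monotonicity together with the gluing inequalities $a_i+\pi_{i-1,n+1-i}\ge c$ and $a_i+\pi_{i,n-i}\ge c$, which agrees with the paper's analysis. But the two decisive steps of your plan do not go through. First, the hoped-for matching of the anti-diagonal freedom with the diagonal freedom of an SPP is genuinely obstructed: given the interior of the anti-triangle, the admissible values of $a_i$ form the interval $[\,c-m_i,\,m_i\,]$ with $m_i=\min\left(\pi_{i-1,n+1-i},\pi_{i,n-i}\right)$, so there are $2m_i-c+1$ choices and the interval is forced to be centered at $c/2$; on the SPP side the diagonal entry $\sigma_{i,i}$ ranges over an arbitrary interlacing interval $[\sigma_{i,i+1},\sigma_{i-1,i}]$ of size $\sigma_{i-1,i}-\sigma_{i,i+1}+1$. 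No reflect-and-complement transformation converts the first kind of local count into the second, so the ``equivalent data'' verification you flag as the main obstacle is not a technicality but the crux, and it fails as stated. Indeed the paper explicitly notes that no bijection between QTCPPs and SPPs is known and leaves finding one as an open problem, so a proof along your first route would be a new result, not a routine check.

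Second, the sum-type boundary conditions are not of the interlacing form that the Lindstr\"om--Gessel--Viennot correspondence accepts, so ``apply LGV to the fundamental domain'' is not available until one first normalizes. This is the key idea your proposal is missing: the paper replaces each anti-diagonal entry $a_i$ by $\max(a_i,c-a_i)$, observes that the fiber of this map has size $2$ per entry (size $1$ exactly when $c$ is even and $a_i=c/2$), and thereby converts the enumeration into a \emph{weighted} count of normalized arrays, with weight $2^n$ for odd $c$ and $2^{n-d(\widehat{\pi})}$ for even $c$ --- only these normalized objects admit a clean path model (with doubled weights on initial north steps in the even case). Moreover, even once a path model exists, the resulting determinants are not identified with an SPP determinant by elementary row and column operations: the paper must actually \emph{evaluate} them in closed form via the determinant evaluation in \cite{Krattenthaler99} (or invoke \cite{Proctor88} and \cite{CiucuFischer15}), treat even and odd $c$ separately --- a parity split entirely absent from your plan --- and then compare the resulting products with the known product formula for symmetric plane partitions. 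So your proposal correctly sets up the combinatorics but lacks both the normalization that makes a path/determinant encoding possible and the computational core that completes the proof.
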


We present different proofs of the above theorem and how it is related to known results on plane partitions, lozenge tilings and perfect matchings respectively. For odd $c$ we relate quasi transpose-complementary plane partitions to transpose-complementary plane partitions and obtain immediately the following numerical connection between symmetric and transpose-complementary plane partitions which seems to be new
\begin{equation}
\label{eq: TCPP and SPP}
2^{n-1} \TCPP(n,n,2c)= \SPP(n-1,n-1,2c+1),
\end{equation}
where $\TCPP(n,n,2c)$ denotes the number of transpose-complementary plane partitions inside an $(n,n,2c)$-box and $\SPP(n,n,c)$ denotes the number of symmetric plane partitions inside an $(n,n,c)$-box.
 Since the proof of the above theorem is computational, it is still an open question if one can find a bijection between symmetric plane partitions and quasi transpose-complementary plane partitions.



\section{Fully complementary partitions}
\label{sec: fcps}
\subsection{Fully complementary Ferrers diagrams}
\label{sec: fc ferrers}

A \defn{$d$-dimensional Ferrers diagram} $\lambda$ is a finite subset of $(\N_{>0})^{d+1}$ such that $(x_1,\ldots,x_{d+1}) \in \lambda$ implies $(y_1,\ldots,y_{d+1}) \in \lambda$ whenever $1 \leq y_i \leq x_i$ for all $1 \leq i \leq d+1$. Equivalently $\lambda$ is an order ideal in the poset $(\N_{>0})^{d+1}$ where the order relation is component-wise the order relation of the integers. For a positive integer $n$, we define $[n]=\{1,\ldots,n\}$.
We say that $\lambda$ is \defn{contained} in an $(n_1,\ldots,n_{d+1})$-box for positive integers $n_1,\ldots, n_{d+1}$, if $\lambda$ is a subset of $[n_1] \times \ldots \times [n_{d+1}]$. It is immediate that the map
\begin{equation*}
\lambda \mapsto \pi(\lambda) = \big(
|\{ (i_1,\ldots,i_d,k)\in \lambda \}|
\big)_{i_1,\ldots,i_d}
\end{equation*}
is a bijection between $d$-dimensional Ferrers diagrams and $d$-dimensional partitions which respects the property of being contained in an $(n_1,\ldots,n_{d+1})$-box. Therefore, we identify for the remainder of this paper a $d$-dimensional Ferrers diagram with the $d$-dimensional partition it is mapped to. \medskip

Let $\n=(n_1,\ldots,n_{d+1})$ be a sequence of positive integers and $I \subseteq [d+1]$. We define the bijection $\rho_{I,\n}$ from $[2n_1] \times \ldots \times [2n_{d+1}]$ onto itself as
\begin{equation*}
\rho_{I,\n}(x_1,\ldots,x_{d+1}) := \left(\begin{cases}
x_i \qquad & i \notin I,\\
2n_i+1-x_i & i \in I,
\end{cases} \right)_{1 \leq i \leq d+1}.
\end{equation*}
If $\n$ is clear from the context we will omit it and write $\rho_I$ instead of $\rho_{I,\n}$. We can use the map $\rho_I$ to rephrase the definition of \emph{quarter complementary plane partitions}. Let $\lambda$ be a $2$-dimensional Ferrers diagram contained in a $(2n_1,2n_2,2n_3)$-box. Then $\rho_{\{1,2\}}(\lambda)$ corresponds to the copy of $\lambda$ placed in the corner $(2n_1,2n_2,1)$ of the box, $\rho_{\{1,3\}}(\lambda)$ to the copy of $\lambda$ placed in the corner $(2n_1,1,2n_3)$ and $\rho_{\{2,3\}}(\lambda)$ to the copy placed in the corner $(1,2n_2,2n_3)$. Hence $\lambda$ corresponds to a QCPP if $\lambda, \rho_{\{1,2\}}(\lambda), \rho_{\{1,3\}}(\lambda), \rho_{\{2,3\}}(\lambda)$ have pairwise empty intersection and their union is equal to $[2n_1]\times [2n_2] \times [2n_3]$. We extend this definition to any dimension $d$.

\begin{defi}
\label{def: FC Ferrers}
Let $n_1,\ldots,n_{d+1}$ be positive integers. A $d$-dimensional Ferrers diagram $\lambda$ is called \defn{fully complementary} inside a $(2n_1,\ldots,2n_{d+1})$-box if for all pairs of subsets $I,J \subseteq [d+1]$ of even size holds $\rho_I(\lambda)  \cap \rho_J(\lambda) = \emptyset$ and
\[
\bigcup_{\substack{I \subseteq [d+1]\\|I| \text{ even}}}\rho_I(\lambda)= [2n_1] \times \ldots \times [2n_{d+1}].
\]
\end{defi}
It is easy to see, that $\rho_I(\lambda)  \cap \rho_J(\lambda) = \emptyset$ holds for all subsets $I,J \subseteq [d+1]$ of even size exactly if $\lambda \cap \rho_I(\lambda) = \emptyset$ holds for all subsets $I \subseteq [d+1]$ of even size.
\bigskip

The $3$-dimensional Ferrers diagram $\lambda=\{(1,1,1,1),(2,1,1,1)\}$ is fully complementary inside a $(2,2,2,2)$-box. This can be seen easily by calculating the according images under $\rho_I$ for even sized $I$ which are given by 
\begin{align*}
&\rho_{\{1,2\}}(\lambda)= \{(2,2,1,1),(1,2,1,1)\}, 
&\rho_{\{2,3\}}(\lambda)= \{(1,2,2,1),(2,2,2,1)\}, \\
&\rho_{\{1,3\}}(\lambda)= \{(2,1,2,1),(1,1,2,1)\}, 
&\rho_{\{2,4\}}(\lambda)= \{(1,2,1,2),(2,2,1,2)\}, \\
&\rho_{\{1,4\}}(\lambda)= \{(2,1,1,2),(1,1,1,2)\}, 
&\rho_{\{3,4\}}(\lambda)= \{(1,1,2,2),(2,1,2,2)\}, \\
&\rho_{\{1,2,3,4\}}(\lambda)= \{(2,2,2,2),(1,2,2,2)\}.
\end{align*}
There are three further Ferrers diagrams which are fully complementary inside $(2,2,2,2)$, namely $\{(1,1,1,1),(1,2,1,1)\}$, $\{(1,1,1,1),(1,1,2,1)\}$ and $\{(1,1,1,1),(1,1,1,2)\}$.
\bigskip

In the above definition of fully complementary we restricted ourselves to boxes with even side lengths. The definition can be extended to any side lengths, however, as we see next, either there are no such Ferrers diagrams, or we obtain a set of Ferrers diagram which is in bijection to one where the box has only even side lengths.
First assume that there exist $k<l$ such that the $k$-th and $l$-th side lengths of the box are given by $2n_k+1$ and $2n_l+1$ respectively and that $\lambda$ is a Ferrers diagram which is fully complementary inside this box. Let us regard the point $P=(1,\ldots,1,n_k,1,\ldots,1,n_l,1,\ldots,1)$ whose components are $1$ except for the $k$-th and $l$-th component. Then there exists an $I\subset [d+1]$ of even size such that $P \in \rho_{I,\widehat{\n}}(\lambda)$, where $\widehat{\n}=(\widehat{n}_1,\ldots,\widehat{n}_{d+1})$ and $\widehat{n}_i$ is half of the side length of the box in the direction of $i$-th standard vector. Let $I^\prime$ be the set $I^\prime = \left(I \setminus \{k,l \} \right) \cup \left( \{k,l\} \setminus I \right)$. It is immediate that $|I^\prime|$ is even and that $P \in \rho_{I^\prime,\widehat{\n}}(\lambda)$ which is a contradiction. Hence there exists no fully complementary Ferrers diagram inside a box with at least two odd side lengths.

Now let all side lengths of the box be even with the exception of one side length. Without loss of generality we can assume that the box is a $(2n_1,\ldots,2n_d,2n_{d+1}+1)$-box. For a fully complementary partition $\pi$ in this box, we see that a point of the form $(i_1,\ldots,i_d,n_{d+1}+1)$ is exactly in $\pi$ if $i_j \leq n_j$ for all $1 \leq j \leq d$. The map 
\[
\Psi: \lambda \mapsto \{ (i_1,\ldots,i_d,i_{d+1}) \in \lambda: i_{d+1} \leq n_{d+1} \} \cup
 \{ (i_1,\ldots,i_d,i_{d+1}-1) \in \lambda: i_{d+1} > n_{d+1}+1 \},
\]
is a surjection from Ferrers diagrams inside a $(2n_1,\ldots,2n_d,2n_{d+1}+1)$-box to Ferrers diagrams inside a $(2n_1,\ldots,2n_d,2n_{d+1})$-box. It is not difficult to see, that the map $\Psi$ commutes with $\rho_I$ for each $I \subseteq [d+1]$, i.e.,
\[
\Psi \circ \rho_{I,(n_1,\ldots,n_d,n_{d+1}+\frac{1}{2})} = \rho_{I,(n_1,\ldots,n_{d+1})} \circ \Psi.
\]
Hence $\Psi$ maps fully complementary Ferrers diagrams inside a $(2n_1,\ldots,2n_d,2n_{d+1}+1)$-box  bijectively to those inside a $(2n_1,\ldots,2n_{d+1})$-box.

\subsection{The generating function of FCPs}
\label{sec: gen fct}

We call a $d$-dimensional partition $\pi$ \defn{fully complementary} inside a $(2n_1,\ldots,2n_{d+1})$-box if its corresponding Ferrers diagram is fully complementary in this box. The fully complementary partitions inside the $(2,2,2,2)$-box are shown next where we write $(\pi_{i,j,1})$ in the top row and $(\pi_{i,j,2})$ in the row below.
\begin{align*}
\begin{matrix}
1 & 1  \\ 
0 & 0
\end{matrix} \qquad \qquad \qquad
\begin{matrix}
1 & 0  \\ 
1 & 0
\end{matrix} \qquad \qquad \qquad
\begin{matrix}
1 & 0  \\ 
0 & 0
\end{matrix} \qquad \qquad \qquad
\begin{matrix}
2 & 0  \\ 
0 & 0
\end{matrix}\\
\\
\begin{matrix}
0 & 0  \\ 
0 & 0
\end{matrix} \qquad \qquad \qquad
\begin{matrix}
0 & 0  \\ 
0 & 0
\end{matrix} \qquad \qquad \qquad
\begin{matrix}
1 & 0  \\ 
0 & 0
\end{matrix} \qquad \qquad \qquad
\begin{matrix}
0 & 0  \\ 
0 & 0
\end{matrix}
\end{align*}
\medskip

For $\n=(n_1,\ldots,n_{d+1})$ and a subset $I \subseteq [d]$ we define the map $\gamma_{I,\n}$ as
\[
\gamma_{I,\n}(\pi) = \left( \pi_{\rho_{I,\n}(i_1,\ldots,i_n)}  \right)_{i_1,\ldots,i_n}.
\]
Again we omit the subscript $\n$ and write $\gamma_I$ whenever $\n$ is clear from context. The next lemma rephrases the conditions of being fully complementary directly for a $d$-dimensional partition.

\begin{lem}
\label{lem: fully comp partitions}
Let $n_1,\ldots, n_{d+1}$ be positive integers. A $d$-dimensional partition $\pi$ is fully complementary inside a $(2n_1,\ldots,2n_{d+1})$-box if and only if
\begin{align}
\label{eq: fc cond 1}
\pi_{i_1,\ldots,i_d} \cdot \gamma_J(\pi)_{i_1,\ldots,i_d}=0, \\
\label{eq: fc cond 2}
\sum_{I \subseteq [d]}\gamma_I(\pi)_{i_1,\ldots,i_d} = 2n_{d+1},
\end{align}
for all non-empty subsets $J \subseteq [d]$ of even size and for all $(i_1,\ldots,i_d) \in [2n_1] \times \ldots \times [2n_d]$.
\end{lem}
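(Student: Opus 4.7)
The plan is to translate the set-theoretic conditions on the Ferrers diagram $\lambda=\lambda(\pi)$ corresponding to $\pi$ into arithmetic conditions on the entries of $\pi$, by examining what happens in a fixed column $\{x\}\times[2n_{d+1}]$ for $x=(i_1,\ldots,i_d)\in[2n_1]\times\cdots\times[2n_d]$.

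First I would observe how each $\rho_I(\lambda)$ decomposes column by column. Because $\lambda$ is a down-set, for every $y$ the fiber $\{k:(y,k)\in\lambda\}$ is the prefix $[\pi_y]$ of $[2n_{d+1}]$. For an even-sized $I\subseteq[d+1]$ split into two cases: if $d+1\notin I$ then $\rho_I$ fixes the last coordinate and $\rho_I(\lambda)\cap(\{x\}\times[2n_{d+1}])=\{x\}\times[\gamma_I(\pi)_x]$, a \emph{bottom} stack of height $\gamma_I(\pi)_x=\pi_{\rho_I(x)}$; if $d+1\in I$ write $I=I'\cup\{d+1\}$ with $|I'|$ odd, and then $\rho_I(\lambda)\cap(\{x\}\times[2n_{d+1}])=\{x\}\times\{2n_{d+1}+1-\gamma_{I'}(\pi)_x,\ldots,2n_{d+1}\}$, a \emph{top} stack of height $\gamma_{I'}(\pi)_x$. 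As $I$ ranges over even-sized subsets of $[d+1]$, the truncated index ranges over all subsets of $[d]$: even-sized ones give bottom stacks, odd-sized ones give top stacks.

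Next I would translate the two defining properties of full complementarity into conditions on these stack heights. The union condition $\bigcup_{|I|\text{ even}}\rho_I(\lambda)=[2n_1]\times\cdots\times[2n_{d+1}]$ restricted to one column says the heights of all $2^d$ stacks sum to $2n_{d+1}$, which is exactly \eqref{eq: fc cond 2}. For disjointness I would break $\rho_I(\lambda)\cap\rho_{I_0}(\lambda)=\emptyset$ into three cases: bottom-bottom (at most one nonzero), top-top (at most one nonzero), and bottom-top (heights sum to at most $2n_{d+1}$, which is automatic once \eqref{eq: fc cond 2} holds and at most one bottom and one top are nonzero). Using the involution identity $\rho_{I_1}\circ\rho_{I_2}=\rho_{I_1\triangle I_2}$ and the substitution $y=\rho_{I_2}(x)$, the ``at most one nonzero bottom'' condition over all $x$ rewrites as $\pi_y\cdot\gamma_J(\pi)_y=0$ for all $y$ and all nonempty even $J\subseteq[d]$ (take $J=I_1\triangle I_2$). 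The top-top reduction yields the identical condition, since the symmetric difference of two odd-sized subsets of $[d]$ is again nonempty and even-sized. Both are precisely \eqref{eq: fc cond 1}. Conversely, given \eqref{eq: fc cond 1} and \eqref{eq: fc cond 2}, the same column-wise analysis shows that in each column exactly one bottom stack of height $a$ and (if $a<2n_{d+1}$) one top stack of height $2n_{d+1}-a$ are nonzero, so the stacks partition the column, which assembles back to $\rho_I(\lambda)$ partitioning the whole box.

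I expect the only mildly delicate step to be the bookkeeping of the bijection between even-sized $I\subseteq[d+1]$ and arbitrary $I'\subseteq[d]$ (split by parity of $|I'|$), and verifying carefully that the ``bottom-top'' disjointness case is genuinely implied by the other conditions rather than an independent constraint. Everything else is elementary algebra with symmetric differences.
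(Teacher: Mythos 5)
Your proof is correct and takes essentially the same route as the paper's: both translate full complementarity fiber-by-fiber over each column $\{(i_1,\ldots,i_d)\}\times[2n_{d+1}]$, identifying $\rho_I(\lambda)$ with a bottom prefix stack of height $\gamma_I(\pi)$ when $d+1\notin I$ and a top suffix stack of height $\gamma_{I'}(\pi)$ when $I=I'\cup\{d+1\}$ with $|I'|$ odd, so that the union condition becomes \eqref{eq: fc cond 2} and the disjointness conditions become \eqref{eq: fc cond 1}. The only difference is that you prove inline, via symmetric differences and the substitution $y=\rho_{I_2}(x)$, the reduction of pairwise disjointness of the $\rho_I(\lambda)$ to disjointness of each $\rho_I(\lambda)$ with $\lambda$ itself, which the paper states without proof as the ``easy to see'' remark following Definition~\ref{def: FC Ferrers}.
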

\begin{proof}
Let $\lambda$ be a fully complementary Ferrers diagram inside a $(2n_1,\ldots,2n_{d+1})$-box and $(\pi_{i_1,\ldots,i_d})_{i_1,\ldots,i_d}$ the corresponding $d$-dimensional partition. For $I \subseteq [d]$ we have
\[
\gamma_I(\pi)_{i_1,\ldots,i_d}= \begin{cases}
|\{(i_1,\ldots,i_d,k) \in \rho_I (\lambda): k \in \N_{>0}\}| \qquad \qquad & |I| \text{ is even}, \\
|\{(i_1,\ldots,i_d,k) \in \rho_{I\cup\{d+1\}} (\lambda): k \in \N_{>0}\}|  & |I| \text{ is odd}.
\end{cases}
\]
By definition the intersection $\lambda \cap \rho_{I}(\lambda)$ is empty for even sized $I$ exactly if \linebreak$\pi_{i_1,\ldots,i_d} \cdot \gamma_I(\pi)_{i_1,\ldots,i_d}=0$ for all $(i_1,\ldots,i_d) \in [2n_1] \times \ldots \times [2n_d]$. For odd sized $I \subseteq [d]$, the intersection $\lambda \cap \rho_{I \cup \{d+1\}}(\lambda)$ is empty if and only if $\pi_{i_1,\ldots,i_d}+\gamma_I(\pi)_{i_1,\ldots,i_d} \leq 2n_{d+1}$ for all $(i_1,\ldots,i_d) \in [2n_1] \times \ldots \times [2n_d]$.
Finally, the union of all $\rho_J(\lambda)$ with $J \subseteq [d+1]$ of even size is exactly $[2n_1]\times \ldots \times [2n_{d+1}]$ if and only if \eqref{eq: fc cond 2} is satisfied for all $(i_1,\ldots,i_d) \in [2n_1] \times \ldots \times [2n_d]$.
\end{proof}

Denote by $\FCP(n_1,\ldots,n_{d+1})$ the set of fully complementary partitions inside a \linebreak $(2n_1,\ldots,2n_{d+1})$-box.  For $1 \leq k \leq d$ we define the map\footnote{Formally, we have for each $\n=(n_1,\ldots,n_{d+1})$ a different map $\varphi_k$. However since $\n$ will always be clear from the context, we do not include $\n$ in the notation.} $\varphi_k : \FCP(n_1,\ldots,n_{d+1}) \rightarrow \FCP(n_1,\ldots,n_k+1,\ldots,n_{d+1})$ as
\[
\varphi_k(\pi)_{i_1,\ldots,i_d}= 
\begin{cases}
\pi_{i_1,\ldots,i_d} \qquad & i_k \leq n_k, \\
n_{d+1} & i_k \in \{n_k+1,n_k+2\} \text{ and } i_j \leq n_j \text{ for all } 1 \leq j \neq k \leq d,\\
\pi_{i_1,\ldots,i_k-2,\ldots,i_d} & i_k>n_k+2,\\
0 & \text{otherwise},
\end{cases}
\]
and the map $\varphi_{d+1}: \FCP(n_1,\ldots,n_{d+1}) \rightarrow \FCP(n_1,\ldots,n_d,n_{d+1}+1)$ as
\[
\varphi_{d+1}(\pi)_{i_1,\ldots,i_d}= \begin{cases}
\pi_{i_1,\ldots,i_d}+2 \qquad & i_j \leq n_j \text{ for all }1 \leq j \leq d,\\
\pi_{i_1,\ldots,i_d} & \text{ otherwise}.
\end{cases}
\]
It is not difficult to see, that these maps are well-defined.
Let $\pi$ be the fully complementary partition inside the $(4,4,4)$-box displayed next.
\[
\begin{array}{cccc}
4 & 2 & 2 & 0 \\
3 & 2 & 2 & 0 \\
1 & 0 & 0 & 0 \\
0 & 0 & 0 & 0
\end{array}
\]
The images of $\pi$ under the maps $\varphi_1,\varphi_2$ or $\varphi_3$ respectively are given below.
\begin{align*}
\begin{array}{cccc}
4 & 2 & 2 & 0 \\
3 & 2 & 2 & 0 \\
2 & 2 & 0 & 0 \\
2 & 2 & 0 & 0 \\
1 & 0 & 0 & 0 \\
0 & 0 & 0 & 0
\end{array}
\qquad\qquad
\begin{array}{cccccc}
4 & 2 & 2 & 2 & 2  & 0 \\
3 & 2 & 2 & 2 & 2  & 0 \\
1 & 0 & 0 & 0 & 0 & 0 \\
0 & 0 & 0 & 0 & 0 & 0
\end{array}
\qquad\qquad
\begin{array}{cccc}
6 & 4 & 2 & 0 \\
5 & 4 & 2 & 0 \\
1 & 0 & 0 & 0 \\
0 & 0 & 0 & 0
\end{array}
\end{align*}
\bigskip

As we see in a moment, it is useful to extend the definition of fully complementary partitions to ``empty boxes''. In particular we define $\FCP(n_1,\ldots,n_{d+1})$ to consist of the ``empty array'' in case that one $n_{k_0}$ is equal to $0$ and all other $n_i$ are positive. We extend the map $\varphi_k$ to these sets, where $\varphi_k$ is the identity (mapping the empty array onto the empty array) if $k\neq k_0$ and mapping the empty array to
\[
\left(
\begin{cases}
n_{d+1} \qquad &i_j \leq n_j \text{ for all } 1 \leq j \neq k_0 \leq d,\\
0 & \text{ otherwise,}
\end{cases}
\right)_{i_1,\ldots,i_d},
\]
if $k=k_0 \neq d+1$ and to
\[
\left(
\begin{cases}
2 \qquad &i_j \leq n_j \text{ for all } 1 \leq j \leq d,\\
0 & \text{ otherwise,}
\end{cases}
\right)_{i_1,\ldots,i_d},
\]
if $k=k_0=d+1$. The maps $\varphi_k$  allow us to prove the following recursive structure for FCPs.

\begin{prop}
\label{prop: rec struct}
Let $\n=(n_1,\ldots,n_{d+1})$ be a sequence of positive integers. Then $\FCP(\n)$ is equal to the disjoint union
\begin{equation}
\label{eq: FCP recursive}
\FCP(\n) = \dotcup\limits_{1 \leq k \leq d+1} \varphi_k\big(\FCP(\n-e_k) \big),
\end{equation}
where $e_k$ is the $k$-th unit vector.
\end{prop}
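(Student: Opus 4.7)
The plan is to prove the proposition in three steps: well-definedness of each $\varphi_k$, a structural identity at the centre, and disjointness plus coverage.

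First, I would check that each $\varphi_k$ maps $\FCP(\n-e_k)$ into $\FCP(\n)$ by a direct case analysis using Lemma~\ref{lem: fully comp partitions}. Splitting the range of $i_k$ into the three regions $i_k\leq n_k$, $i_k\in\{n_k+1,n_k+2\}$, $i_k>n_k+2$ for $k\leq d$, or into the first-quadrant region $\{i_j\leq n_j\text{ for all }j\}$ versus its complement for $k=d+1$, the identities \eqref{eq: fc cond 1} and \eqref{eq: fc cond 2} for $\varphi_k(\pi)$ reduce via an index shift to the corresponding identities for $\pi$ in the smaller box; the $+2$ shift of $\varphi_{d+1}$ on the first quadrant is absorbed by the factor $2$ on the right-hand side of \eqref{eq: fc cond 2}.

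The heart of the proof is a structural identity at the centre of $\rho \in \FCP(\n)$. Writing $v=\rho_{n_1,\ldots,n_d}$ and $w_j=\rho_{n_1,\ldots,n_j+1,\ldots,n_d}$, the condition \eqref{eq: fc cond 2} at $(n_1,\ldots,n_d)$ forces $v>0$ since $n_{d+1}\geq 1$; \eqref{eq: fc cond 1} with $J=\{j,l\}$ then gives $\gamma_{\{j,l\}}(\rho)_{n_1,\ldots,n_d}=0$, and the monotonicity of $\rho$ extends this to $\gamma_I(\rho)_{n_1,\ldots,n_d}=0$ for all $|I|\geq 2$. Hence \eqref{eq: fc cond 2} collapses to
\[
v+w_1+\cdots+w_d=2n_{d+1},
\]
and applying \eqref{eq: fc cond 1} at the positions $(n_1,\ldots,n_j+1,\ldots,n_d)$ shows $w_jw_l=0$ for $j\neq l$, so at most one $w_k$ is non-zero. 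Writing $v-w_k=2b\geq 0$, the central profile falls into exactly one of two shapes: \emph{Shape $A_k$} ($b=0$, so $v=w_k=n_{d+1}$ and $w_j=0$ for $j\neq k$), or \emph{Shape $B$} ($b\geq 1$, including the case where all $w_j$ vanish and $v=2n_{d+1}$).

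It remains to propagate each central shape. I would argue that Shape $A_k$ forces the entire central slab $\{i_k\in\{n_k,n_k+1\}\}$ into the form prescribed by $\varphi_k$, placing $\rho$ in $\varphi_k(\FCP(\n-e_k))$, and that Shape $B$ ensures the array obtained from $\rho$ by subtracting $2$ from each first-quadrant entry lies in $\FCP(\n-e_{d+1})$. For any first-quadrant position $(i_1,\ldots,i_d)$ with $\rho_{i_1,\ldots,i_d}>0$, the same monotonicity-plus-product argument yields a local identity analogous to the centre one, which pins down $\rho$ at the flipped positions and inductively matches the desired formula. Disjointness then follows immediately: two distinct Shape $A_k$'s would force a $\rho$-value to be both $n_{d+1}$ and $0$, while Shape $A_k$ together with Shape $B$ would violate the partition property of the subtracted array at the first-quadrant boundary in direction $k$ (where $v-w_k=0<2$). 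I expect the main technical obstacle to be precisely this propagation step: the local identity only gives the correct value of $\rho$ after carefully tracking which flipped positions fall in the first quadrant, the central slab, or the complement, and one must also verify that the candidate preimage satisfies all FCP conditions in the smaller box. Once that is settled, the recursion \eqref{eq: FCP recursive} follows.
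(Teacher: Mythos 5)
Your proposal is correct and follows essentially the same route as the paper's proof: both analyse the centre entries $\pi_{\n}$ and $\pi_{\n+e_k}$ via \eqref{eq: fc cond 1}, \eqref{eq: fc cond 2} and monotonicity to split into the case $\pi_{\n}=n_{d+1}$ with a unique nonzero neighbour direction $k$ (your Shape $A_k$, handled by peeling off the slab $i_k\in\{n_k,n_k+1\}$) and the case $\pi_{\n}\geq n_{d+1}+1$ (your Shape $B$, handled by $\varphi_{d+1}^{-1}$), with disjointness in both treatments coming from these mutually exclusive centre profiles. Two minor remarks: your parity claim $v-w_k=2b$ needs no separate justification, since $v+w_k=2n_{d+1}$ forces it, and the propagation step you flag as the main obstacle is precisely what the paper executes, by squeezing the chain $\pi_{\widehat{\n}}\geq\pi_{\widehat{\ii}}\geq\pi_{\n}$ between the lower bound $\pi_{\n}+\pi_{\n+e_k}=2n_{d+1}$ and the upper bound from \eqref{eq: fc cond 2}, then killing the remaining slab entries with $\rho_{\{k,l\}}$.
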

\begin{proof}
First we prove that the images of the $\varphi_k$ are disjoint. Let $k<l$ be integers with $\varphi_k\big(\FCP(\n-e_k)\big) \cap \varphi_l\big(\FCP(\n-e_l)\big) \neq \emptyset$ and let $\pi$ be an element of this intersection. First, we assume that $l=d+1$. Since $\pi \in \varphi_k(\n-e_k)$ we have by definition $\pi_{n_1,\ldots,n_d}=n_{d+1}$. On the other hand, $\pi \in \varphi_{d+1}\big(\FCP(\n-e_{d+1})\big)$ implies $\pi_{n_1,\ldots,n_{d}} \geq n_{d+1}+1$ which is a contradiction. Now let $l < d+1$. By definition of $\varphi_k,\varphi_l$ we have 
\[
\pi_{\n+e_k} = \pi_{\n+e_l} =n_{d+1}.
\]
This implies 
\[
\pi_{\n+e_k} \cdot \left( \gamma_{\{k,l\}} (\pi) \right)_{\n+e_k}  =\pi_{\n+e_k} \cdot \pi_{\n+e_l} = n_{d+1}^2 \neq 0,
\]
which contradicts \eqref{eq: fc cond 1}. Hence the union in \eqref{eq: FCP recursive} is disjoint. \medskip

Let $\pi \in \FCP(\n)$. It is easy to see that $\pi_{\n} \geq n_{d+1}$. If $\pi_{\n} \geq n_{d+1}+1$, then $\pi \in \varphi_{d+1}\big(\FCP(\n-e_{d+1})\big)$. Hence let us assume that $\pi_{\n}=n_{d+1}$. By \eqref{eq: fc cond 1}, $\gamma_I(\pi)_{\n}=0$ for all $I \subset [d]$ of even size. By applying any $\gamma_{I}$ for an odd sized $I \subseteq [d]$ to \eqref{eq: fc cond 1}, we see that there is at most one $I$ of odd size for which $\gamma_I(\pi)_{\n}$ is non-zero. Hence by \eqref{eq: fc cond 2} there exists exactly one $I \subseteq [d]$ with $\pi_{\n}+\gamma_I(\pi)_{\n}=2n_{d+1}$ which implies $\gamma_I(\pi)_{\n} = \pi_{\rho_I(\n)}=n_{d+1}$.
Let $k \in I$ and define $\widehat{\n}=(1,\ldots,n_{k},\ldots,1)$ as the vector whose entries are $1$ except on position $k$, where the entry is $n_{k}$ and $\widehat{\ii}=(i_1,\ldots,i_d)$ as the vector with $i_k=n_k$ and $i_j \leq n_j$ for all $1 \leq j \neq k \leq d$. Since $\pi$ is a $d$-dimensional partition, we have the following inequalities.
\begin{center}
\begin{tikzpicture}
\node at (0,2) {$\pi_{\widehat{\n}}$};
\node at (1,2) {$\geq$};
\node at (2,2) {$\pi_{\widehat{\ii}}$};
\node at (3,2) {$\geq$};
\node at (4,2) {$\pi_{\n}$};
\node at (5,2) {$=$};
\node at (6,2) {$n_{d+1}$};
\begin{scope}[yshift=.5cm]
\node[rotate around={90:(-.5,.5)}] (0,0) {$\leq$};
\node[rotate around={90:(.5,1.5)}] (0,0) {$\leq$};
\node[rotate around={90:(1.5,2.5)}] (0,0) {$\leq$};
\node[rotate around={90:(2.5,3.5)}] (0,0) {$=$};
\end{scope}
\node at (0,1) {$\pi_{\widehat{\n}+e_{k}}$};
\node at (1,1) {$\geq$};
\node at (2,1) {$\pi_{\widehat{\ii}+e_{k}}$};
\node at (3,1) {$\geq$};
\node at (4,1) {$\pi_{\n+e_{k}}$};
\node at (5,1) {$\geq$};
\node at (6,1) {$\pi_{\rho_I(\n)}$};
\end{tikzpicture}
\end{center}
Since $\pi_{\rho_{\{k\}}(\n)}=\pi_{\n+e_k}>0$ and there exists only one non-empty $I$ with $\pi_{\rho_I(\n)} \neq 0$, this implies that $I=\{k\}$ and hence $\pi_{\n+e_k}=n_{d+1}$. Furthermore we have $\pi_{\widehat{\n}} +\pi_{\widehat{\n}+e_{k}}\geq \pi_{\n}+\pi_{\n+e_k} =2n_{d+1}$ by the above inequalities and $\pi_{\widehat{\n}} +\pi_{\widehat{\n}+e_{k}}\leq 2n_{d+1}$ by \eqref{eq: fc cond 2} since $\gamma_{\{k\}}(\pi)_{\widehat{\n}}=\pi_{\widehat{\n}+e_{k}}$. This implies $\pi_{\widehat{\n}}= \pi_{\widehat{\n}+e_{k}}=n_{d+1}$ and hence $\pi_{\widehat{i}}=\pi_{\widehat{i}+e_k}=n_{d+1}$ for all $\widehat{i}$ defined as before. Denote for $l \neq k$ by $\widehat{\n}_l$ the vector with all components equal to $1$ except of the $k$-th and $n$-th component which are $n_k$ or $n_l$ respectively. Since $\rho_{\{k,l\}}(\widehat{\n}_l) = \widehat{\n}_l+e_k+e_l$ and $\rho_{\{k,l\}}(\widehat{\n}_l+e_k) = \widehat{\n}_l+e_l$ it follows from  \eqref{eq: fc cond 1} that $\pi_{\widehat{\n}+e_l} =\pi_{\widehat{\n}+e_l+e_k} =0$ and hence $\pi_{i_1,\ldots,i_d}=0$ for all $(i_1,\ldots,i_d)$ such that $i_k\in \{n_k,n_k+1\}$ and there exists an $i_j >n_j$. Denote by $\pi^\prime$ the array obtained by deleting all entries for which the $i$-th component of the index is either $n_k$ or $n_k+1$. It is not difficult to verify that $\pi^\prime \in \FCP(\n-e_{k})$ and $\pi = \varphi_{k}(\pi^\prime)$ which proves the claim.
\end{proof}

As we see next, Theorem~\ref{thm: main} is now a direct consequence of the above proposition.
\begin{proof}[Proof of Theorem~\ref{thm: main}]
Let us denote by $Z(\x)$ the left hand side of  \eqref{eq: main}, i.e., $Z(\x) = \sum_{\n \in \N^{d+1}} |\FCP(\n)|\x^{\n}$. Denote further by $Z_j(\x)= \sum_{\n}|\FCP(\n)|\x^{\n}$, where the sum is over all $\n \in \N^{d+1}$ where the $j$-th component is $0$ and all the other components are positive. It is immediate that
\[
Z_j(\x) = \prod_{\substack{1 \leq i \leq d+1 \\ i\neq j}}\frac{x_i}{1-x_i},
\]
since each of the $\FCP(\n)$ in the sum has exactly one element, namely the ``empty array''. Using Proposition~\ref{prop: rec struct}, we rewrite $Z(\x)$ as
\begin{multline*}
Z(\x) = \sum_{\n \in \N_{>0}^{d+1}}\left|\FCP(\n)\right| \x^{\n} + \sum_{j=1}^{d+1} Z_j(\x)
=  \sum_{\n \in \N_{>0}^{d+1}}\sum_{i=1}^{d+1}\left| \varphi_{i}^{-1}(\FCP(\n))\right| \x^{\n} + \sum_{j=1}^{d+1} Z_j(\x) \\
=  \sum_{i=1}^{d+1} x_i \left( \sum_{\n \in \N_{>0}^{d+1}}|\FCP(\n-e_i)|\x^{\n-e_i} \right) + \sum_{j=1}^{d+1} Z_j(\x)\\
=\sum_{i=1}^{d+1}x_i \left(Z(\x) - \sum_{\substack{1 \leq j \leq n \\ j\neq i}}Z_j(\x) \right)+ \sum_{j=1}^{d+1} Z_j(\x)  \\
= Z(\x) \sum_{i=1}^{d+1}x_i +\sum_{j=1}^{d+1} Z_j(\x)\left(1-\sum_{\substack{1 \leq i \leq d+1 \\ i \neq j}}x_i \right).
\end{multline*}
By bringing all $Z(\x)$ terms on one side and using the explicit formula for $Z_j(\x)$ we obtain the assertion.
\end{proof}

\begin{rem}
\label{rem: FCP to paths}
Let $\n=(n_1,\ldots,n_{d+1}) \in \N_{>0}^{d+1}$ be given. By applying Proposition~\ref{prop: rec struct} iteratively, we see that each $\pi \in \FCP(\n)$ can be uniquely written as $\varphi_{i_k} \circ \ldots \circ \varphi_{i_1} (\sigma)$ where $\sigma$ is an ``empty array'' inside an appropriate ``empty box'' with dimension $\n^\prime=(n_1^\prime,\ldots,n_{d+1}^\prime)$ such that $\varphi_{i_1}(\sigma)$ is a non empty array. We map $\pi$ to the lattice path starting at $\n^\prime$ and ending at $\n$ whose $j$-th step is $e_{i_j}$. This yields a bijection between FCPs inside a $(2n_1,\ldots,2n_{d+1})$-box and lattice paths inside the positive $(d+1)$-dimensional orthant starting from any integer point on its $d$-dimensional boundary and ending at $\n$ with step set $\{e_1,\ldots,e_{d+1}\}$ such that all coordinates are positive after the first step. Below we show the construction for an FCP inside a $(6,4,4)$-box which is mapped to the lattice path from $(1,1,0)$ to $(3,2,2)$ with steps $(e_3,e_1,e_3,e_2,e_1)$.

\[
\begin{array}{cccc}
4 & 2 & 2 & 0 \\
3 & 2 & 2 & 0 \\
2 & 2 & 0 & 0 \\
2 & 2 & 0 & 0 \\
1 & 0 & 0 & 0 \\
0 & 0 & 0 & 0
\end{array}
\hspace{6pt} \xleftarrow{\varphi_1} \hspace{6pt} 
\begin{array}{cccc}
4 & 2 & 2 & 0 \\
3 & 2 & 2 & 0 \\
1 & 0 & 0 & 0 \\
0 & 0 & 0 & 0
\end{array}
\hspace{6pt}  \xleftarrow{\varphi_2} \hspace{6pt} 
\begin{array}{cc}
4 & 0 \\
3 & 0 \\
1 & 0 \\
0 & 0
\end{array}
\hspace{6pt}  \xleftarrow{\varphi_3} \hspace{6pt} 
\begin{array}{cc}
2 & 0 \\
1 & 0 \\
1 & 0 \\
0 & 0
\end{array}
\hspace{6pt}  \xleftarrow{\varphi_1} \hspace{6pt} 
\begin{array}{cc}
2 & 0 \\
0 & 0
\end{array}
\hspace{6pt}  \xleftarrow{\varphi_3} \hspace{6pt} 
\emptyset
\]

\end{rem}

\section{Symmetry classes of QCPPs}
\label{sec: dim 2}

\subsection{(Quasi) symmetric QCPPs}
Remember that a plane partition $\pi$ is quarter complementary inside a $(2a,2b,2c)$-box if for all $1 \leq i \leq 2a$ and $1 \leq j \leq 2b$ we have
\begin{equation}
\label{eq: quart comp 1}
\pi_{i,j} \cdot \pi_{2a+1-i,2b+1-j} = 0,
\end{equation}
and for $\pi_{i,j}>0$  exactly one of the following equations holds: 
\begin{equation}
\label{eq: quart comp 2}
\pi_{i,j}+\pi_{2a+1-i,j}=2c, \qquad \text{or} \qquad \pi_{i,j}+\pi_{i,2b+1-j}=2c.
\end{equation}
Regarded as a regular plane partition, $\pi$ is called symmetric if $a=b$ and  $\pi_{i,j}=\pi_{j,i}$ for all $1 \leq i,j \leq 2a$. By \eqref{eq: quart comp 1} we see that $\pi$ can only be symmetric if the entries on its anti-diagonal are $0$, i.e., $\pi_{i,2a+1-i}=0$ for $1 \leq i \leq 2a$. Together with \eqref{eq: quart comp 2} this implies $\pi_{a,a}=2c$ and hence that the only symmetric quarter complementary plane partition is
\[
\pi= \left(\begin{cases}
2c \qquad &i,j \leq a,\\
0 & \text{otherwise,}
\end{cases}
\right)_{1 \leq i,j \leq 2a}.
\]
In order to obtain more interesting objects we omit the symmetry condition on the anti-diagonal and call a quarter complementary plane partition $\pi$ \defn{quasi symmetric} if $\pi_{i,j}=\pi_{j,i}$ for all $1 \leq i,j \leq 2a$ and $i \neq 2a+1-j$. Denote by $\QS(a,c)$ the set of quarter complementary plane partitions inside a $(2a,2a,2c)$-box which are quasi symmetric.

\begin{prop}
\label{prop: QSQCPPs}
Let $a,c$ be positive integers. Then $\QS(a,c)$ is equal to
\begin{equation}
\label{eq: QSQCPPs}
\QS(a,c) = \varphi_1 \circ \varphi_2 \big(\QS(a-1,c)\big) \dot\cup \varphi_2 \circ \varphi_1 \big(\QS(a-1,c)\big) \dot\cup \varphi_3\big(\QS(a,c-1)\big)
\end{equation}
\end{prop}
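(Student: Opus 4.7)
The approach is to refine the decomposition in Proposition~\ref{prop: rec struct} by layering the quasi-symmetry condition on top of it. Specializing Proposition~\ref{prop: rec struct} to $d=2$ and $\n=(a,a,c)$, every $\pi \in \FCP(a,a,c)$ lies in exactly one of $\varphi_1(\FCP(a-1,a,c))$, $\varphi_2(\FCP(a,a-1,c))$, $\varphi_3(\FCP(a,a,c-1))$. From the proof of that proposition these cases are detected by the values $\pi_{a,a}, \pi_{a+1,a}, \pi_{a,a+1}$: the $\varphi_3$-case corresponds to $\pi_{a,a}>c$; otherwise $\pi_{a,a}=c$, and then \eqref{eq: quart comp 1} forces $\pi_{a+1,a}\pi_{a,a+1}=0$, while \eqref{eq: fc cond 2} guarantees one of them equals $c$ — so the $\varphi_1$-case is $\pi_{a+1,a}=c$ and the $\varphi_2$-case is $\pi_{a,a+1}=c$. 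Since the three images are already disjoint at the FCP level, their intersections with $\QS(a,c)$ are automatically disjoint, which handles the disjoint union in \eqref{eq: QSQCPPs}.

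For the $\varphi_3$-case, I would write $\pi=\varphi_3(\sigma)$ with $\sigma \in \FCP(a,a,c-1)$; since $\varphi_3$ adds $2$ to every entry with $i,j\leq a$ and leaves the rest untouched, it commutes with the swap $(i,j)\mapsto(j,i)$, so $\sigma$ is quasi-symmetric. For the $\varphi_1$-case, write $\pi=\varphi_1(\sigma)$ with $\sigma \in \FCP(a-1,a,c)$, defined by $\sigma_{i,j}=\pi_{i,j}$ for $i\leq a-1$ and $\sigma_{i,j}=\pi_{i+2,j}$ for $i\geq a$. Quasi-symmetry of $\pi$ applied to the non-anti-diagonal pairs $(a,a-1)\leftrightarrow(a-1,a)$ and $(a+1,a-1)\leftrightarrow(a-1,a+1)$ gives $\sigma_{a-1,a}=\pi_{a-1,a}=\pi_{a,a-1}=c$ and $\sigma_{a-1,a+1}=\pi_{a-1,a+1}=\pi_{a+1,a-1}=c$. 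Applying Proposition~\ref{prop: rec struct} to $\sigma$ in its $(a-1,a)$-box then places $\sigma$ in the $\varphi_2$-image, giving $\sigma=\varphi_2(\tau)$ with $\tau \in \FCP(a-1,a-1,c)$, so $\pi=\varphi_1\circ\varphi_2(\tau)$. The $\varphi_2$-case is completely symmetric and yields $\pi=\varphi_2\circ\varphi_1(\tau)$.

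It remains to verify that the extracted $\tau$ is itself quasi-symmetric, and conversely that $\varphi_1\circ\varphi_2$, $\varphi_2\circ\varphi_1$, $\varphi_3$ preserve quasi-symmetry when applied to elements of the appropriate $\QS$-set. For the $\varphi_1\circ\varphi_2$-composition, $\tau_{i,j}$ equals $\pi_{i,j}$, $\pi_{i,j+2}$, $\pi_{i+2,j}$, or $\pi_{i+2,j+2}$ according to which of the four quadrants (relative to the central slab of width $2$) the index lies in. In each quadrant, the test $\tau_{i,j}=\tau_{j,i}$ translates to an equality of $\pi$-entries whose anti-diagonal exception $i+j=2a+1$ on the $\pi$-grid corresponds exactly to $i+j=2a-1$ on the $\tau$-grid — the anti-diagonal of the smaller box — so quasi-symmetry transfers precisely where it is needed. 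The reverse inclusions follow from the same block-wise check run forward.

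The main obstacle is the bookkeeping through the four quadrants: one must track carefully that the ``shift by $+2$'' built into $\varphi_1$ and $\varphi_2$ maps the anti-diagonal $i+j=2a-1$ of the smaller box to the anti-diagonal $i+j=2a+1$ of the larger box, so that the one exceptional pair $\pi_{a+1,a}\neq\pi_{a,a+1}$ forced on us by the distinction between the $\varphi_1$- and $\varphi_2$-cases never produces a violation of quasi-symmetry. Once this is verified the three cases line up with the three terms of \eqref{eq: QSQCPPs} and the disjoint decomposition follows.
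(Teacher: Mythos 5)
Your proposal is correct and takes essentially the same route as the paper's proof: both specialize Proposition~\ref{prop: rec struct} to the three cases, note that the $\varphi_3$-preimage is automatically quasi symmetric, and use the quasi symmetry of $\pi$ in the $\varphi_1$-case to force $\varphi_1^{-1}(\pi)$ into $\varphi_2\big(\FCP(a-1,a-1,c)\big)$ (and symmetrically for $\varphi_2$), deferring the transfer of quasi symmetry to $\tau$ to a direct entrywise check. Your quadrant bookkeeping, including the observation that the shift by $2$ carries the anti-diagonal $i+j=2a-1$ of the smaller box to $i+j=2a+1$ of the larger one, merely spells out what the paper dismisses as ``not difficult to check''.
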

\begin{proof}
Let $\pi \in \QS(a,c)$. By Proposition~\ref{prop: rec struct} $\pi$ is either in $\varphi_3\big(\FCP(a,a,c-1)\big)$,\linebreak $\varphi_1\big(\FCP(a-1,a,c)\big)$ or $\varphi_2\big(\FCP(a,a-1,c)\big)$. In the first case $\varphi_3^{-1}(\pi)$ is obviously quasi symmetric again. Assume $\pi \in \varphi_1\big(\FCP(a-1,a,c)\big)$. By the definition of $\varphi_1$ we have $\pi_{a,j}=\pi_{a+1,j}=c$ if $j \leq a$ and $\pi_{a,j}=\pi_{a+1,j}=0$ if $j>a$. By the quasi symmetry of $\pi$ this implies $\pi_{j,a}=\pi_{j,a+1}=c$ for $j < a$ and $\pi_{j,a}=\pi_{j,a+1}=0$ for $j > a+1$. Therefore we obtain $\varphi_1^{-1}(\pi) \in \varphi_2\big(\FCP(a-1,a-1,c)\big)$. It is not difficult to check that $\varphi_1^{-1}(\varphi_2^{-1}(\pi))$ is again quasi symmetric. The last case follows analogously.
\end{proof}
By defining $\QS(a,c)$ to consist of the ``empty array'' if either $a$ or $c$ is equal to $0$, we obtain immediately the following corollary.
\begin{cor}
The generating function for quasi symmetric quarter complementary plane partitions is given by
\[
\sum_{a,c \geq 0} |\QS(a,c)|x^ay^c = \frac{x+y-2x^2-x y}{(1-x)(1-	2x-y)}.
\]
\end{cor}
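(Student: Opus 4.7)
The plan is to translate Proposition~\ref{prop: QSQCPPs} into a linear recursion for $|\QS(a,c)|$ and then solve the resulting functional equation for the generating function $F(x,y):=\sum_{a,c\geq 0}|\QS(a,c)|x^ay^c$. Since the three components of the disjoint union in~\eqref{eq: QSQCPPs} are injective images of $\QS(a-1,c)$ (appearing twice, via $\varphi_1\circ\varphi_2$ and $\varphi_2\circ\varphi_1$) and of $\QS(a,c-1)$ (via $\varphi_3$), one immediately reads off
\[
|\QS(a,c)|\;=\;2\,|\QS(a-1,c)|+|\QS(a,c-1)|\qquad(a,c\geq 1).
\]

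For the boundary I would invoke the ``empty array'' convention stated just before the corollary: $|\QS(a,0)|=1$ for $a\geq 1$ and $|\QS(0,c)|=1$ for $c\geq 1$, supplemented by $|\QS(0,0)|=0$ (the latter choice is forced if the constant term of $F$ is to match the proposed closed form). Now multiplying $F(x,y)$ by $1-2x-y$ annihilates every coefficient indexed by $(a,c)$ with $a,c\geq 1$ by virtue of the recursion, so only the two coordinate axes contribute:
\[
(1-2x-y)\,F(x,y)\;=\;\sum_{a\geq 1}\bigl(|\QS(a,0)|-2|\QS(a-1,0)|\bigr)x^a+\sum_{c\geq 1}\bigl(|\QS(0,c)|-|\QS(0,c-1)|\bigr)y^c.
\]
A direct evaluation of the two sums collapses the right-hand side to $x-\frac{x^2}{1-x}+y=\frac{x+y-2x^2-xy}{1-x}$, and dividing by $1-x$ then yields the asserted closed form.

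I do not foresee any real obstacle here: once Proposition~\ref{prop: QSQCPPs} is in hand, the corollary amounts to a few lines of generating-function bookkeeping. The only subtle point worth flagging is the interpretation of the empty-array convention at the corner $(a,c)=(0,0)$, which can shift $F$ by the constant $1$; this ambiguity must be resolved so that the right-hand side of the corollary vanishes at $x=y=0$, and this is the only place where a small sanity check is needed.
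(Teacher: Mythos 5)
Your proposal is correct and matches the paper's intended argument: the paper derives the corollary ``immediately'' from Proposition~3.2 via exactly the recursion $|\QS(a,c)|=2|\QS(a-1,c)|+|\QS(a,c-1)|$ with the empty-array boundary convention, which is what you carry out explicitly. Your flagged subtlety at $(a,c)=(0,0)$ is resolved the right way --- the paper's convention (mirroring the FCP case, where exactly one coordinate vanishes) makes $\QS(0,0)$ empty, consistent with the closed form vanishing at $x=y=0$.
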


\subsection{Cyclically symmetric QCPPs}
 A plane partition $\pi$ is called \defn{cyclically symmetric} if a point $(i,j,k)$ in its Ferrers diagram implies that $(j,k,i)$ is also in its Ferrers diagram. As we see next, there is no cyclically symmetric quarter complementary plane partition. Let $\pi$ be a cyclically symmetric quarter complementary Ferrers diagram inside a $(2a,2a,2a)$-box. It is not difficult to see, that being quarter complementary implies that one of the points $(1,1,2a)$, $(1,2a,1)$ or $(2a,1,1)$ has to be part of $\pi$, and hence all of them since $\pi$ is cyclically symmetric. The set $\rho_{\{1,2\}}(\pi)$ therefore contains the points $(2a,1,1)$ and $(1,2a,1)$ which is a contradiction to $\pi \cap \rho_{\{1,2\}}(\pi) = \emptyset$.
 
 Contrary to the previous subsection, we did not find an ``interesting'' generalisation of cyclically symmetric to ``quasi cyclically symmetric'' for which we can deduce either a result or a conjecture in the case of QCPPs.

\subsection{Self- and transpose-complementary QCPPs}

In order to study self- or transpose-complementary QCPPs, we need to specify the box we want to consider the complementation in. For a QCPP inside a $(2a,2b,2c)$-box, the possible boxes for complementation are a $(2a,2b,c)$-, a $(2a,b,2c)$- and an $(a,2b,2c)$-box. For symmetry reasons it suffices to consider QCPPs which are self- or transpose-complementary inside a $(2a,2b,c)$-box.
We call a QCPP $\pi$ inside a $(2a,2b,2c)$-box \defn{self-complementary} if $\pi_{i,j}+\pi_{2a+1-i,2b+1-j}=c$ for all $1 \leq i \leq 2a$ and $1 \leq j \leq 2b$, and \defn{quasi transpose-complementary}\footnote{It is immediate that there are no transpose-complementary QCPPs since the condition $\pi_{i,j}+\pi_{2a+1-j,2a+1-i}=c$ on the anti-diagonal contradicts \eqref{eq: fc cond 1}. Similar to quasi symmetric, we therefore exclude the condition on the anti-diagonal to obtain interesting objects.} if $a=b$ and $\pi_{i,j}+\pi_{2a+1-j,2a+1-i}=c$ for all $1 \leq i,j \leq 2a$ with $i \neq 2a+1-j$. We obtain the following enumeration results.

\begin{prop}
\label{prop: SC}
The number of self-complementary QCPPs inside a $(2a,2b,2c)$-box is $\binom{a+b}{a}$.
\end{prop}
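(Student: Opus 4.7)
The plan is to reduce self-complementary QCPPs inside a $(2a,2b,2c)$-box bijectively to self-complementary Young diagrams inside a $2a\times 2b$ rectangle. Combining the self-complementary identity $\pi_{i,j}+\pi_{2a+1-i,2b+1-j}=c$ with the QCPP condition \eqref{eq: quart comp 1} forces $\{\pi_{i,j},\pi_{2a+1-i,2b+1-j}\}=\{0,c\}$ for every $(i,j)$. Hence $\pi$ takes only the values $0$ and $c$, so it is determined by the set $S=\{(i,j):\pi_{i,j}=c\}\subseteq[2a]\times[2b]$. The plane partition axiom is equivalent to $S$ being a Young diagram, and self-complementarity translates to the condition $(i,j)\in S$ if and only if $(2a+1-i,2b+1-j)\notin S$.

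Next I would verify that condition \eqref{eq: quart comp 2} is automatic. If $\pi_{i,j}=c$, then applying the self-complementary identity at the point $(i,2b+1-j)$ gives $\pi_{i,2b+1-j}+\pi_{2a+1-i,j}=c$; since both summands lie in $\{0,c\}$, exactly one of them equals $c$. Translating back, exactly one of $\pi_{i,j}+\pi_{2a+1-i,j}=2c$ or $\pi_{i,j}+\pi_{i,2b+1-j}=2c$ holds, which is precisely \eqref{eq: quart comp 2}. Conversely, every self-complementary Young diagram $S$ in $[2a]\times[2b]$ gives rise via $\pi_{i,j}=c\cdot\mathbf{1}_{(i,j)\in S}$ to a self-complementary QCPP by the same checks.

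It then remains to count partitions $\lambda=(\lambda_1\geq\cdots\geq\lambda_{2a})$ with $\lambda_i\leq 2b$ and $\lambda_i+\lambda_{2a+1-i}=2b$ for all $i$. Such a $\lambda$ is determined by its first $a$ rows, which satisfy $2b\geq\lambda_1\geq\cdots\geq\lambda_a\geq b$ (the lower bound coming from $\lambda_a\geq\lambda_{a+1}=2b-\lambda_a$). Setting $\mu_i=\lambda_i-b$ yields a bijection with partitions $b\geq\mu_1\geq\cdots\geq\mu_a\geq 0$, i.e.\ partitions fitting in an $a\times b$ box. These are counted by $\binom{a+b}{a}$, proving the proposition.

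The only real subtlety is confirming that no constraints survive beyond those cut out by a self-complementary Young diagram --- in particular, that \eqref{eq: quart comp 2} is automatic once $\pi$ has been shown to be $\{0,c\}$-valued and self-complementary. Once that is settled the enumeration is immediate.
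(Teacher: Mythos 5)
Your proof is correct, but it takes a genuinely different route from the paper's. Both arguments begin by forcing all entries into $\{0,c\}$ --- the paper deduces this from \eqref{eq: quart comp 2} together with the bound $\pi_{i,j}\leq c$ implied by self-complementarity, while you deduce it from \eqref{eq: quart comp 1} together with the sum being $c$; both are valid. From there the paper invokes the recursive structure of Proposition~\ref{prop: rec struct}: since no entry exceeds $c$, a self-complementary QCPP cannot lie in the image of $\varphi_3$, so it is $\varphi_1(\sigma)$ or $\varphi_2(\sigma)$ with $\sigma$ again self-complementary, yielding the Pascal recurrence $f(a,b)=f(a-1,b)+f(a,b-1)$ and hence $\binom{a+b}{a}$ by induction on $a+b$. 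You instead collapse the object entirely: a $\{0,c\}$-valued self-complementary plane partition is the same as a self-complementary Young diagram in a $2a\times 2b$ rectangle, and your verification that \eqref{eq: quart comp 2} is automatic --- by applying self-complementarity at $(i,2b+1-j)$ to see that exactly one of $\pi_{2a+1-i,j}$, $\pi_{i,2b+1-j}$ equals $c$ --- is precisely the point that needs care, and you handle it correctly; the reduction $2b\geq\lambda_1\geq\cdots\geq\lambda_a\geq b$ to partitions in an $a\times b$ box is then standard. Your approach buys an explicit bijection between self-complementary QCPPs and partitions in an $a\times b$ box, self-contained and independent of the $\varphi_k$ machinery; the paper's approach buys uniformity, since the same recursive decomposition also drives Propositions~\ref{prop: QSQCPPs} and~\ref{prop: TC}.
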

\begin{proof}
The condition $\pi_{i,j}+\pi_{2a+1-i,2b+1-j}=c$ implies that all entries are at most $c$. Together with \eqref{eq: quart comp 2} this implies that all entries are either $0$ or $c$. Hence by Proposition~\ref{prop: rec struct}, $\pi$ is either of the form $\varphi_1(\sigma)$ or $\varphi_2(\sigma)$ for an appropriate QCPP $\sigma$. It is easy to check that $\sigma$ is again self-complementary. The assertion follows now by induction on $a+b$.
\end{proof}

\begin{prop}
\label{prop: TC} A QCPP inside a $(2a,2a,2c)$-box is quasi transpose-complementary if it is quasi symmetric and self-complementary. The number of quasi transpose-complementary QCPPs inside a $(2a,2a,2c)$-box is $2^{a}$.
\end{prop}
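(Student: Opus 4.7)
The first assertion follows immediately from the definitions: if $\pi$ is quasi symmetric and self-complementary, then for any $(i,j)$ with $i+j\neq 2a+1$ one has $\pi_{i,j}=\pi_{j,i}$ by quasi symmetry and $\pi_{j,i}+\pi_{2a+1-j,2a+1-i}=c$ by self-complementarity applied at $(j,i)$, so $\pi_{i,j}+\pi_{2a+1-j,2a+1-i}=c$, which is the QTC condition.

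For the counting, the plan is to establish the converse as well (so that QTC is equivalent to the conjunction of QS and SC) and then intersect the three-branch recursion of Proposition~\ref{prop: QSQCPPs} with the SC condition. The core step is a lemma: every quasi transpose-complementary QCPP has all entries in $\{0,c\}$. The QTC relation directly bounds $\pi_{i,j}\leq c$ off the anti-diagonal, and the plane partition inequalities $\pi_{i,2a+1-i}\leq\pi_{1,2a+1-i}\leq c$ for $i\geq 2$ and $\pi_{1,2a}\leq\pi_{1,2a-1}\leq c$ extend this bound to the full array. With $\pi_{i,j}\leq c$ in place, \eqref{eq: fc cond 1} and \eqref{eq: fc cond 2} upgrade any positive entry to $c$: from $\pi_{i,j}>0$ one deduces $\pi_{2a+1-i,2a+1-j}=0$ and $\pi_{2a+1-i,j}\cdot\pi_{i,2a+1-j}=0$, so the sum condition $\pi_{i,j}+\pi_{2a+1-i,j}+\pi_{i,2a+1-j}=2c$ (three terms each at most $c$) forces $\pi_{i,j}=c$. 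Applying \eqref{eq: fc cond 2} once more when $\pi_{i,j}=0$ yields $\pi_{2a+1-i,2a+1-j}=c$, so $\pi$ is SC; quasi symmetry then follows from SC and QTC via the same chain as in the first paragraph.

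Given the equivalence, Proposition~\ref{prop: QSQCPPs} offers three candidate branches for building $\qtcpp(a,c)$. The $\varphi_3$ branch is ruled out as follows: if $\pi=\varphi_3(\sigma)$ had entries in $\{0,c\}$, then $\sigma_{i,j}=c-2$ throughout the upper-left quadrant (forcing $c\geq 2$), plane partition monotonicity then forces $\sigma_{a+1,a}=\sigma_{a,a+1}=0$, and \eqref{eq: fc cond 2} for $\sigma$ at $(a,a)$ gives $\sigma_{a+1,a+1}=c>c-2=\sigma_{a,a}$, contradicting monotonicity. For the remaining operations $\varphi_1\circ\varphi_2$ and $\varphi_2\circ\varphi_1$, a quadrant-wise case check verifies that they both preserve SC and reflect it back to $\sigma$: the two inserted middle strips are each a block of $c$'s paired against a block of $0$'s under central inversion, while the four corner quadrants inherit SC directly from $\sigma$.

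This produces the recursion $|\qtcpp(a,c)|=2|\qtcpp(a-1,c)|$ with base case $|\qtcpp(0,c)|=1$, hence $|\qtcpp(a,c)|=2^{a}$. I expect the main obstacle to be the entries-in-$\{0,c\}$ lemma of paragraph 2: since QTC imposes no direct constraint on the anti-diagonal, the extension of the bound $\pi_{i,j}\leq c$ across it is where the interplay between QTC and plane-partition monotonicity genuinely enters. Once this is settled, both halves of the argument—the equivalence with QS and SC and the elimination of the $\varphi_3$ branch—reduce to direct verifications from the FCP axioms.
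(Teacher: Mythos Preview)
Your proof is correct and follows essentially the same strategy as the paper: establish that all entries lie in $\{0,c\}$, deduce the equivalence of QTC with QS${}+{}$SC, and then count via the recursion of Proposition~\ref{prop: QSQCPPs} after eliminating the $\varphi_3$ branch. The only differences are cosmetic---you bound anti-diagonal entries via monotonicity and derive SC before QS, whereas the paper reaches the $\{0,c\}$-lemma directly from QTC together with \eqref{eq: quart comp 2} and derives QS first---but the structure and the final inductive count $2\cdot 2^{a-1}=2^a$ are identical.
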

\begin{proof}
The condition $\pi_{i,j}+\pi_{2a+1-j,2a+1-i}=c$ together with \eqref{eq: quart comp 2} implies that all entries of $\pi$ are either $0$ or $c$. Since $\pi$ is quarter complementary inside a $(2a,2a,2c)$-box, the sum over all entries must be $2 a^2c$. Hence exactly $2a^2$ entries are equal to $c$ and $2a^2$ entries are equal to $0$. For $i \leq a$ exactly one of the equations $\pi_{i,i}+\pi_{i,2a+1-i}=2c$ or $\pi_{i,i}+\pi_{2a+1-i,i}=2c$ holds by \eqref{eq: fc cond 2}. Hence exactly half of the entries on the anti-diagonal are equal to $c$. For $i \neq 2a+1-j$ we therefore have $\pi_{2a+1-j,2a+1-i}=c$ exactly if $\pi_{i,j}=0$, which is by \eqref{eq: quart comp 1} equivalent to $\pi_{2a+1-i,2a+1-j}=c$. This implies that $\pi$ is quasi symmetric and therefore also self-complementary. It is immediate that a quasi symmetric, self-complementary QCPP is also quasi transpose-complementary.

By the proof of Proposition~\ref{prop: QSQCPPs} and Proposition~\ref{prop: SC} each quasi transpose-complementary QCPP $\pi$ is of the form $\varphi_1 \circ \varphi_2 (\sigma)$ or $\varphi_2 \circ \varphi_1 (\sigma)$, where $\sigma$ is a quasi transpose-complementary QCPP inside a $\big(2(a-1),2(a-1),2c\big)$-box. The assertion follows now by induction on $a$.
\end{proof}

\section{Quasi symmetry classes of plane partitions}
\label{sec: QS pps}

In the previous section we introduced variations of two symmetry classes for quarter complementary plane partitions. The aim of this section is to consider these and similar symmetry classes for plane partitions. The following three conjectures were found by computer experiments. We have added the according data as well as explicit guessed enumeration formulas for small values of one of the parameters in the appendix.

\begin{conj}
\label{conj: quasi sym pps}
Let us denote by $\qspp(a,c)$ the number of quasi symmetric plane partitions inside an $(a,a,c)$-box. Then,
\begin{equation}
\qspp(a,c-a) =\begin{cases}
c \binom{c+a-1}{2a-1}p_a(c) \qquad & a \text{ is even},\\
\binom{c+a-1}{2a-1}p_a(c) \qquad & a \text{ is odd},\\
\end{cases}
\end{equation}
where $p_a(c)$ is an irreducible polynomial in $\mathbb{Q}[c]$ that is even, i.e., $p_a(c)=p_a(-c)$. Further the common denominator of the coefficients of $p_a(c)$ is a product of ``small primes''.
\end{conj}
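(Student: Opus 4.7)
The plan is to derive a polynomial expression for $\qspp(a,c)$ via a non-intersecting lattice path (NILP) model and then read off its factorisation. Under the standard Lindström--Gessel--Viennot bijection between plane partitions inside an $(a,a,c)$-box and families of non-intersecting lattice paths, the symmetry condition $\pi_{i,j}=\pi_{j,i}$ translates into a reflective symmetry of a path family across the main diagonal. Quasi symmetric plane partitions correspond to relaxing this symmetry only along the anti-diagonal, so that the additional degrees of freedom can be parametrised by an auxiliary monotone sequence of integer entries on the anti-diagonal. The first step is to make this model precise and then apply LGV, together with the summation over the anti-diagonal data, to express $\qspp(a,c)$ as a determinant of an $a\times a$ matrix whose entries are explicit polynomials in $c$ (up to an overall factor accounting for the free anti-diagonal data).

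The next step is to verify the structural claims directly on the resulting determinant. The degree in $c$ of $\qspp(a,c-a)$ can be read off from the LGV formula and should match the degree of the conjectured right-hand side $c^{\varepsilon}\binom{c+a-1}{2a-1}p_a(c)$, where $\varepsilon\in\{0,1\}$ depends on the parity of $a$. To locate the factor $\binom{c+a-1}{2a-1}$, whose roots are the $2a-1$ integers $c\in\{-(a-1),\ldots,-1,0,1,\ldots,a-1\}$, the plan is to substitute each such value into the determinant and exhibit an explicit linear dependence among its rows or columns; combinatorially this corresponds to the relevant box having vanishing or ``negative'' height, so that no objects are counted. The additional factor of $c$ in the even case should originate from an extra cancellation at $c=0$ that is visible only when $a$ is even, traceable to the anti-diagonal of even length having no central fixed cell. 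The evenness $p_a(c)=p_a(-c)$ is expected to follow from a symmetry of the determinantal formula under $c\mapsto -c$ once the binomial factor has been divided out, ideally realised by a natural involution on the NILP model coupled with a complementation of the anti-diagonal data.

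The hard part will be the irreducibility of $p_a(c)$ and the assertion about the denominators of its coefficients. Irreducibility of an infinite family of polynomials in $\mathbb{Q}[c]$ essentially never follows from a uniform argument; it is typically verified case by case once a closed form is unavailable. The claim that the coefficients of $p_a(c)$ have denominators involving only ``small primes'' is similarly arithmetic in nature and, absent a formula exposing the relevant prime structure, does not admit a direct proof either. Consequently I would expect a proof of the conjecture to establish the structural part (the factorisation, the degree constraint, and the evenness of $p_a$) by the determinantal approach sketched above and to leave the last two assertions as observations supported by the computational data in the appendix, pending further algebraic insight into the sequence $(p_a)_{a\geq 1}$.
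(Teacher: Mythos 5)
The statement you set out to prove is Conjecture~\ref{conj: quasi sym pps}, and the paper offers no proof of it: it is presented explicitly as the outcome of computer experiments, supported only by the tables and the guessed formulas for $a\leq 6$ in Appendix~\ref{sec: data}. There is therefore no proof in the paper to compare yours against, and your own closing paragraph concedes the point --- you propose to leave the irreducibility of $p_a$ and the ``small primes'' claim ``as observations supported by the computational data''. That concession means your text is a research plan, not a proof; on the two assertions that make the statement a genuine conjecture it deliberately proves nothing, which is the honest position (a uniform irreducibility argument for an infinite family of polynomials is indeed out of reach), but it should be stated plainly that the proposal does not, and cannot as written, establish the statement.

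Even the ``structural part'' you claim to handle has concrete gaps. First, the LGV model is not established: quasi symmetric plane partitions ($\pi_{i,j}=\pi_{j,i}$ except along the anti-diagonal) belong to the symmetric-type classes, which are normally treated via half-region tilings, Pfaffians, or weighted path models rather than a plain $a\times a$ LGV determinant, and the anti-diagonal entries do not decouple into ``an auxiliary monotone sequence'': they are tied to both halves of the array by the row and column inequalities. Compare the paper's proof of Theorem~\ref{thm: qtc pps}, where the analogous relaxation (quasi transpose-complementary) requires a preliminary max-replacement argument on the anti-diagonal entries, with a resulting $2$-power weight, before any path model applies; nothing of that sort is worked out here. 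Second, your plan to locate the roots of $\binom{c+a-1}{2a-1}$ by substituting $c\in\{-(a-1),\ldots,a-1\}$ into the determinant and ``exhibiting an explicit linear dependence'' is aspirational --- at negative $c$ the combinatorial interpretation is void, and no dependence is exhibited; likewise the extra factor $c$ for even $a$ and the evenness $p_a(c)=p_a(-c)$ are asserted as ``expected'' without an identity or involution being produced. Third, the very shape of the conjectured answer --- an irreducible factor $p_a$ of degree growing with $a$ (degree $12$ already for $a=6$) --- signals that any determinant for $\qspp(a,c)$ does not evaluate in product form, so the Krattenthaler-style evaluation that finishes the proof of Theorem~\ref{thm: qtc pps} has no analogue here; this is precisely why the statement remains open.
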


\begin{conj}
We call a plane partition $\pi$ inside an $(a,a,c)$-box \defn{quasi transpose complementary of second kind} (QTC2), if $\pi$ is transpose complementary except along the diagonal, i.e., $\pi_{i,j}+\pi_{a+1-j,a+1-i}=c$ for all $1 \leq i,j \leq a$ with $i \neq j$. Then for $a\geq 2$, the number $\qtcpp_2(a,c)$  of QTC2 plane partitions inside an $(a,a,c)$-box is given by \begin{equation}
\qtcpp_2(a,c-\frac{a}{2}) = \begin{cases}
c \binom{c+\frac{a}{2}-1}{a-1}p_a(c) \qquad & a \text{ is even},\\
\binom{c+\frac{a}{2}-1}{a-1}p_a(c) \qquad & a \text{ is odd},\\
\end{cases}
\end{equation}
where $p_a(c)$ is an irreducible polynomial in $\mathbb{Q}[c]$ that is even. Further the common denominator of the coefficients of $p_a(c)$ is a product of ``small primes''.
\end{conj}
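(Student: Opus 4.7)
My plan is to mimic the strategy that works well for classical transpose-complementary plane partitions, then layer in the diagonal freedom. First, I would represent a plane partition in the $(a,a,c)$-box as a family of $a$ non-intersecting lattice paths $P_1,\ldots,P_a$ in the standard way; the map $(i,j)\mapsto (a+1-j,a+1-i)$ on array indices corresponds to a geometric involution $\sigma$ on path configurations (reflection through the anti-diagonal combined with complementation). A genuine TCPP corresponds to a $\sigma$-invariant family, whereas the QTC2 condition amounts to $\sigma$-invariance modulo a ``defect'' supported along the main-diagonal strip of the array (equivalently, along the fixed line of $\sigma$). The core reduction I would aim for is a bijection between QTC2 plane partitions and pairs consisting of (i) a weakly decreasing diagonal vector $\mathbf d=(d_1\ge\cdots\ge d_a)$ with $0\le d_i\le c$, satisfying the compatibility $d_i+d_{a+1-i}\le c$ for $i\ne a+1-i$, together with (ii) a symmetric filling of the remaining cells that is determined by its above-diagonal entries.

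Next, I would encode the conditional count, for fixed $\mathbf d$, as a determinant via Lindström--Gessel--Viennot applied to the ``upper-triangular'' path subsystem; then, sum over $\mathbf d$ to obtain either (a) a pfaffian in the spirit of Stembridge's symmetric path enumerations, or (b) a single larger determinant whose rows/columns are indexed by diagonal positions. I would then try to evaluate this pfaffian/determinant in closed form using the condensation/LU-factorization techniques developed by Krattenthaler and Ciucu for symmetric lozenge regions with defects, separating out the rational-looking factor $\binom{c+a/2-1}{a-1}$ as the ``half-region'' contribution from a complementary region that is itself a generalized hexagon. The predicted factor $c$ (for even $a$) should drop out from a single anti-diagonal self-paired cell whose forced value imposes a linear $c$-dependence.

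For the two structural assertions about $p_a(c)$, I would proceed as follows. Evenness should arise from the complementation involution $\pi_{i,j}\mapsto c-\pi_{i,j}$: on QTC2 partitions this induces an involution of the counting set, and after extracting the binomial and (for even $a$) the leading $c$, the residual polynomial ought to satisfy $p_a(c)=p_a(-c)$ because the transformation $c\mapsto -c$ corresponds, on the level of the pfaffian, to an overall sign change that is absorbed by the known factors. Irreducibility is essentially ad hoc: I would attempt to show, via a Newton-polygon or reduction-mod-$p$ argument using the ``small primes'' observed in the denominators, that $p_a(c)$ has no nontrivial rational factorization; failing a uniform argument, one falls back on case-by-case verification for each $a$, using the explicit guessed $p_a(c)$ in the appendix.

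The main obstacle is evaluating the pfaffian (or determinant) in a form that exhibits the precise factorization $c\cdot\binom{c+a/2-1}{a-1}\cdot p_a(c)$ rather than as a sum of products or an unfactored expression. The binomial and the linear $c$ are the ``easy'' factors to isolate, but the residual $p_a(c)$ has no obvious product structure, which is exactly why the conjecture only predicts it to be \emph{irreducible}. In particular, I expect that even after a successful pfaffian evaluation, the irreducibility/evenness package will need to be upgraded from the ``small primes'' empirical heuristic to a genuine number-theoretic statement, and this is where a uniform proof is most likely to stall; a fallback plan is to prove the closed-form expression as a polynomial identity in $c$ (leaving $p_a$ defined implicitly as the quotient), and treat the structural properties of $p_a$ as a companion conjecture supported by the appendix data.
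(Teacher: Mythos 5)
There is nothing in the paper to compare your proposal against: this statement is one of the three conjectures of Section~4, found by computer experiments, and the paper offers no proof --- only the data tables and the guessed formulas for $a\le 6$ in Appendix~A. So your attempt must stand on its own, and as it stands it has concrete gaps. First, a structural error: you locate the QTC2 defect ``along the fixed line of $\sigma$,'' but the fixed line of $(i,j)\mapsto(a+1-j,a+1-i)$ is the \emph{anti}-diagonal $i+j=a+1$, whereas QTC2 drops the constraint on the \emph{main} diagonal $i=j$, which $\sigma$ maps to itself only setwise, pairing the distant cells $(i,i)$ and $(a+1-i,a+1-i)$. This distinction is exactly why the paper can prove Theorem~1.2 for QTCPPs (defect on the fixed line, so each anti-diagonal entry is normalized locally via $\pi\mapsto\max(\pi,c-\pi)$ and contributes a local weight $2$ at a path start, feeding directly into the Lindstr\"om--Gessel--Viennot determinants \eqref{eq: det1} and \eqref{eq: det2}) while leaving QTC2 open: for QTC2 there is no local normalization of the free entries, and your proposed reduction to a pair (diagonal vector, symmetric filling) does not obviously hold, since the off-diagonal constraints couple the two halves across the anti-diagonal, not across the main diagonal. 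Second, even granting a pfaffian or determinant encoding, the evaluation toolkit you invoke (LU factorization, condensation) succeeds precisely when the answer factors into small linear factors; the conjecture itself asserts the opposite --- after extracting $c$ and the binomial, the quotient $p_a(c)$ is \emph{irreducible} of growing degree --- so the main evaluation step is expected to have no closed product form and the method stalls exactly there.

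The structural claims fare no better. Your evenness argument via the complementation involution is a non sequitur: an involution on the counting set for a \emph{fixed} $c$ gives an identity of the form $f(c)=f(c)$, not $p_a(c)=p_a(-c)$; evenness in the shifted variable would require a reciprocity-type functional equation such as $f(-c-a)=\pm f(c)$ for the counting polynomial (in the spirit of Ehrhart/Stanley reciprocity), which you neither state nor derive. Irreducibility is worse: you would need to prove irreducibility of an infinite family of polynomials for which no closed formula for the coefficients is available, and Newton-polygon or reduction-mod-$p$ arguments require exactly such arithmetic control; ``small primes'' in the denominators is an empirical observation, not a usable hypothesis. Your fallback --- proving the identity with $p_a$ defined implicitly as the quotient --- is vacuous as a factorization statement: the only nontrivial content it could retain is the divisibility of the counting polynomial by $c\binom{c+a/2-1}{a-1}$ (respectively $\binom{c+a/2-1}{a-1}$), which is a reasonable and possibly tractable sub-goal, but it leaves every distinctive assertion of the conjecture (evenness, irreducibility, denominators) untouched. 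In short: the one genuinely plausible ingredient, the LGV encoding, mirrors the technique the paper uses for Theorem~1.2, but it does not transfer to QTC2 for the geometric reason above, and the remainder of the plan does not close any of the gaps that make this a conjecture rather than a theorem.
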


\begin{conj}
Denote by $\qtcspp_2(a,c)$ the number of symmetric QTC2 plane partitions. Then for $a\geq 2$, 
\begin{equation}
\qtcspp_2(a,c-\frac{a}{2})=
 \begin{cases}
c \binom{c+\frac{a}{2}-1}{a-1}p_a(c) \qquad & a \text{ is even},\\
\binom{c+\frac{a}{2}-1}{a-1}p_a(c) \qquad & a \text{ is odd},\\
\end{cases}
\end{equation}
where $p_a(c)$ is an irreducible polynomial in $\mathbb{Q}[c]$ that is even. Further the common denominator of the coefficients of $p_a(c)$ is a product of ``small primes''.
\end{conj}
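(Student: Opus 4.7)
The plan is to reduce symmetric QTC2 plane partitions to a constrained form of symmetric self-complementary plane partitions (SSCPPs), enumerate those by a lattice-path / determinant method, and then extract the claimed factorization.

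First I would perform a structural reduction. Combining the symmetry $\pi_{i,j}=\pi_{j,i}$ with the QTC2 condition $\pi_{i,j}+\pi_{a+1-j,a+1-i}=c$ (for $i\neq j$) and using symmetry to rewrite $\pi_{a+1-j,a+1-i}=\pi_{a+1-i,a+1-j}$ yields the off-diagonal self-complementarity $\pi_{i,j}+\pi_{a+1-i,a+1-j}=c$. Hence a symmetric QTC2 plane partition is an SSCPP with its diagonal freed. Applying the same identity at $(i,a+1-i)$ forces $\pi_{i,a+1-i}=c/2$ for $i\neq(a+1)/2$, which is the combinatorial reason for the shift $c\mapsto c-a/2$ appearing on the left of the conjecture. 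The genuine degrees of freedom are a weakly decreasing diagonal $\pi_{1,1}\ge\cdots\ge\pi_{a,a}$ and the strictly upper-triangular part, the latter being an SSCPP-type configuration compatible with the chosen diagonal and the forced anti-diagonal.

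Next I would encode the upper-triangular data via non-intersecting lattice paths, as in the standard treatment of SSCPPs, and treat the diagonal entries as additional path-endpoint parameters. Summing over all admissible diagonal sequences, the Lindström--Gessel--Viennot lemma produces a determinant polynomial in $c$. I would then evaluate this determinant using Krattenthaler's determinant lemma (or a variant of the type used in the literature for transpose-complementary and symmetric symmetry classes), extracting $\binom{c+a/2-1}{a-1}$ as a product of row or column factors, together with an additional factor of $c$ when $a$ is even arising from a boundary vanishing, and the residual polynomial $p_a(c)$ as the remaining evaluation. The evenness $p_a(c)=p_a(-c)$ would then follow from a $c\leftrightarrow -c$ reciprocity built into the symmetric plane-partition generating function, and the ``small primes'' bound on denominators is automatic from the product form of the determinant.

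The principal obstacle is the determinant evaluation itself. SSCPP enumeration is already delicate (it is intimately connected with alternating sign matrices), and adding the sum over free diagonals inserts a further layer that need not factorize cleanly. A promising simplification is that the forced anti-diagonal value $c/2$ is exactly the fixed point of the involution $\pi\mapsto c-\pi^{\mathrm{rot}}$, so pairing diagonal entries with their complements might collapse the sum to a single determinant of the type handled by Krattenthaler's lemma. If a uniform closed form resists, one can still hope to establish the conjecture's structural part---polynomiality in $c$, the binomial divisibility, and the evenness of $p_a(c)$---by a priori vanishing arguments at prescribed integer values of $c$ matched against the tabulated small-$a$ data in the Appendix, leaving only the explicit form of $p_a(c)$ to be guessed in each individual case.
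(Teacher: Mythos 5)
You cannot be compared against the paper's own argument here, because the paper has none: this statement is one of the three conjectures of Section~4, supported only by the computer data in Appendix~A.3 and by guessed closed forms for $a\le 6$. Judged on its own, your proposal is a research plan rather than a proof, and every decisive step is left open. The determinant evaluation on which the whole plan rests is only hoped to factor (you concede this yourself); the evenness $p_a(c)=p_a(-c)$ is attributed to a ``$c\leftrightarrow -c$ reciprocity'' that you never formulate, let alone establish, for this symmetry class; and the irreducibility of $p_a(c)$ in $\mathbb{Q}[c]$, which is part of the statement, is never addressed at all --- no Krattenthaler-type determinant lemma outputs irreducibility, and that is exactly the part that keeps the statement conjectural. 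Your fallback of matching vanishing patterns against the tabulated small-$a$ data and guessing $p_a(c)$ case by case is finite verification, not a proof for all $a$.

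There is also a concrete flaw at the very start. Your deduction that the QTC2 relation applied at $(i,a+1-i)$ forces $\pi_{i,a+1-i}=c/2$ is indeed what the printed definition yields, but it would make $\qtcspp_2(a,c)=0$ for every odd $c$ once $a\ge 2$, and for the $(2,2,2)$-box it leaves only $4$ symmetric QTC2 plane partitions; the paper's table instead records $\qtcspp_2(2,1)=4$, $\qtcspp_2(3,1)=5$ and $\qtcspp_2(2,2)=9$, and the conjectured expression $c\binom{c+a/2-1}{a-1}p_a(c)$, a nonzero polynomial in the height, cannot vanish at all odd heights. So the condition as printed cannot be the one that generated the data (or else the conjecture is false as literally stated), and any serious attempt must first resolve this discrepancy; your reduction to ``SSCPP with freed diagonal and forced anti-diagonal,'' and your claimed explanation of the shift $c\mapsto c-\frac{a}{2}$ (which, as in the first conjecture of Section~4 with its shift $c\mapsto c-a$, is merely the recentring that makes $p_a$ even), are built on it. Finally, the analogy with the paper's lattice-path proof of Theorem~1.2 does not transfer the way you suggest: there the free choices were the \emph{first steps} of the paths, which the Lindstr\"om--Gessel--Viennot determinant absorbs row by row through multilinearity; free diagonal entries of a triangular array are not attached to single rows of an LGV matrix, so no comparable collapse of the sum over diagonals to a single Krattenthaler-type determinant is available.
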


\subsection{Proof of Theorem~\ref{thm: qtc pps}}

Let $\pi$ be a quasi transpose complementary plane partition (QTCPP) inside an $(n,n,c)$-box. By definition we have $\pi_{n-j,j} \geq \pi_{n-j+1,j} \geq \pi_{n-j+1,j+1}=c-\pi_{n-j,j}$ for each $1 \leq j \leq n-1$ and equivalently $\pi_{n-j+1,j+1}=c-\pi_{n-j,j} \leq c-\pi_{n-j+1,j} \leq \pi_{n-j,j}$. Hence $\pi$ stays a QTCPP if we replace its diagonal entries $\pi_{n-j+1,j}$ by $\max\left(\pi_{n-j+1,j},c-\pi_{n-j+1,j} \right)$ for all $1 \leq j \leq n$. We denote the resulting QTCPP by $\widehat{\pi}$. Denote by $d(\widehat{\pi})$ the number of anti-diagonal entries which are equal to $\widehat{c}=\left\lfloor \frac{c}{2} \right\rfloor$ and define the weight $\omega(\widehat{\pi})$ as
\[
\omega(\widehat{\pi}) = \begin{cases}
2^n \qquad & c\text{ is odd,}\\
2^{n-d(\widehat{\pi})} & c\text{ is even}.
\end{cases}
\]
Since $|\{\pi_{n-j+1,j},c-\pi_{n-j+1,j}\}|=1$ implies that $c$ is even and $\pi_{n-j+1,j}=\widehat{c}$, it is clear that there are $\omega(\widehat{\pi})$ many QTCPPs mapping to $\widehat{\pi}$ by the above map. Hence the number of QTCPPs is equal to the weighted enumeration of QTCPPs $\widehat{\pi}$ whose anti-diagonal entries are at least $\frac{c}{2}$.
In the following we present two (and a half) proofs for the weighted enumeration of these $\widehat{\pi}$.
\medskip

For odd $c$, each $\widehat{\pi}$ corresponds to a plane partition with entries at most $\widehat{c}$ for which the $i$-th row from top has at most $n+1-i$ positiv entries. The number of these plane partitions can be found in \cite[Corollary 4.1]{Proctor88}, compare also with \cite[Corollary 4.1]{CiucuFischer15}. For even $c$, each $\widehat{\pi}$ corresponds to a lozenge tiling in the ``top half'' of an hexagon with side lengths $n,n,c,n,n,c$, where the bottom of the region is a zig-zag shape directly below the centre line of the hexagon, and each \begin{tikzpicture}[scale=.4]\TopBoundary{0}{0}{0}{3} \end{tikzpicture} lozenge at the bottom of the region is weighted by $\frac{1}{2}$, see Figure~\ref{fig: QTCPP to paths} (right) for an example. The number of these tilings is given in \cite[Corollary 4.3]{CiucuFischer15}. The assertion follows in both cases by using the explicit formulas from \cite{CiucuFischer15, Proctor88}.
\bigskip

Denote by $\qtcpp(n,n,c)$ the number of QTCPPs inside an $(n,n,c)$-box and denote by $M(R)$ the number of perfect matchings of a region $R$. We can rephrase the above observations as
\begin{align*}
\qtcpp(n,n,2c) = 2^n M(P^\prime_{n,n,c}), \qquad
\qtcpp(n,n,2c+1) = 2^n M(P_{n,n,c}),
\end{align*}
where the Regions $P_{n,n,c}$ is defined in \cite[Section 4, Fig. 5]{CiucuFischer15} and $P^\prime_{n,n,c}$ is defined in \cite[Section 4, Fig. 10]{CiucuFischer15}. On the one side we have by Ciucus factorization theorem for graphs with reflective symmetry \cite{Ciucu97} the identity
\[
\PP(a,a,2c) = 2^n  M(P^\prime_{n,n,c}) M(P_{n-1,n-1,c}),
\]
where $\PP(n,n,2c)$ denotes the number of plane partitions inside an $(n,n,2c)$-box (compare for example with \cite{Ciucu97} or \cite[Equation (4.16)]{CiucuFischer15}). This implies the assertion for even $c$ if we know it for odd $c$ and vice versa by using the explicit formulas for the number of (symmetric) plane partitions. Further it is well known, that $2^n  M(P^\prime_{n,n,c})=SPP(n,n,2c)$, see \cite[Equation (5.1)]{CiucuKrattenthaler10} or \cite [Equation (2.6)]{CiucuKrattenthaler17} and that $M(P_{n-1,n-1,c}) = \TCPP(n,n,2c)$, where $\TCPP(n,n,2c)$ is the number of transpose-complementary plane partitions inside an $(n,n,2c)$-box (see for example \cite[Section 6]{Ciucu97}). Combining the above and the assertion (which is already proved above) we obtain immediately the numerical connection between SPPs and TCPPs stated in \eqref{eq: TCPP and SPP}.

\bigskip

Finally we present another proof using non-intersecting lattice paths. We regard $\widehat{\pi}$ as a lozenge tiling as above and draw $n$ lattice paths in the following way. Each path ends at the top right boundary of the hexagon, uses the steps \begin{tikzpicture}[scale=.4]\LeftBoundary{0}{0}{0}{8} \end{tikzpicture} or \begin{tikzpicture}[scale=.4]\TopBoundary{0}{0}{0}{8} \end{tikzpicture} and the $i$-th path from left has length $n+1-i+\widehat{c}$; see Figure~\ref{fig: QTCPP to paths} (left) for an example. By straightening the paths, we obtain non-intersecting lattice paths starting at $A_i=(2i,-i)$ and ending at $E_i=(n+1+i,\widehat{c}-i)$ with north-steps $(0,1)$ and east-steps $(1,0)$, see Figure~\ref{fig: QTCPP to paths} (right). For odd $c$, each family of paths has the same weight, namely $2^n$. Hence the weighted enumeration is therefore by the Lindstr\"om-Gessel-Viennot Theorem equal to
\begin{equation}
\label{eq: det1}
2^n\det_{1 \leq i,j \leq n} \left( \binom{n+\widehat{c}+1-i}{n+1+j-2i} \right).
\end{equation}
For even $c$ we see that an anti-diagonal entry $\pi_{n+1-j,j}=\widehat{c}$ corresponds to the $j$-th path starting with an east-step. Define the points $B^0_i=(2i+1,-i)$ and $B^1_i=(2i,-i+1)$ which are reached from $A_i$ by an east-step or a north-step respectively. By deleting the first step of each path, we obtain a family of non-intersecting lattice paths starting from either $B^0_i$ or $B^1_i$, where the weight is given by $2$ to the power of times we start at $B^1_i$. Using again the Lindstr\"om-Gessel-Viennot Theorem we obtain for the weighted enumeration
\begin{multline}
\label{eq: det2}
\sum_{(b_1,\ldots,b_n) \in \{0,1\}^n}\det_{1 \leq i,j \leq n} \left( 2^{b_i}\binom{n+\widehat{c}-i}{n+j-2i+b_i} \right)\\
= \det_{1 \leq i,j \leq n} \left( \binom{n+\widehat{c}-i}{n+j-2i} +2\binom{n+\widehat{c}-i}{n+j-2i+1} \right),
\end{multline}
where we used the multilinearity of the determinant in the last step.
Both determinants could be evaluated by guessing the corresponding LU decomposition and using the Pfaff-Saalsch\"utz-summation formula, see for example \cite[(2.3.1.3); Appendix (III.2)]{Slater66}; we omit however the details since there is a simpler and more elegant solution as follows.
First we rewrite the determinant in \eqref{eq: det2} as
\begin{equation}
\label{eq: det3}
\det_{1 \leq i , j \leq n} \left(\binom{n+\widehat{c}-i}{n+j-2i} (n+2\widehat{c}-j+1) \right).
\end{equation}
Then both determinants are special cases of determinant evaluation \cite[Equation (3.13)]{Krattenthaler99}
\[
\det_{1 \leq i,j \leq n}\left(\binom{B L_i+A}{L_i+j} \right)
= \frac{\prod\limits_{1 \leq i < j \leq n}(L_i-L_j)}{\prod\limits_{i=1}^n (L_i+n)!}\prod_{i=1}^n\frac{(B L_i+A)!}{((B-1)L_i+A-1)!}\prod_{i=1}^n(A-Bi+1)_{i-1},
\]
 by setting $L_i=n+1-2i$, $B=\frac{1}{2}$ and $A=\widehat{c}+\frac{n+1}{2}$ in the case of \eqref{eq: det1} and $L_i=n-2i$, $B= \frac{1}{2}$ and $A=\widehat{c} + \frac{n}{2}$ in the case of \eqref{eq: det3}.

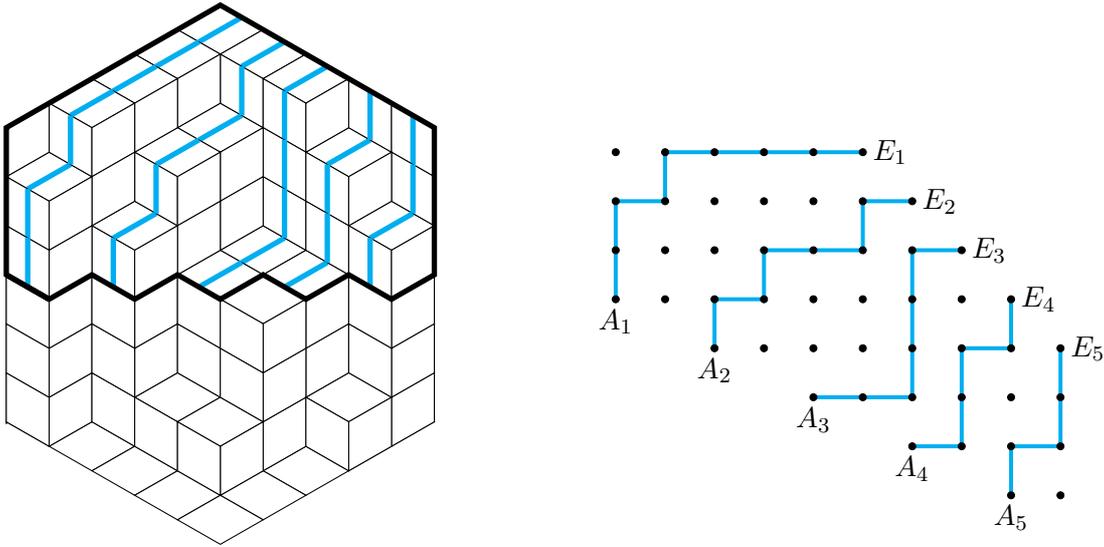
\begin{figure}
\begin{center}
\begin{tikzpicture}[scale=.65]
\TilingBox{5}{5}{6}{3}
\PlanePartitionColour{{6, 6, 6, 5, 4}, {6, 5, 3, 3, 1}, {6, 5, 3, 3, 0}, {6, 4, 1, 1, 0}, {5, 0, 0, 0, 0}}{3}
\TopBoundary{1}{-1}{6}{8}
\TopBoundary{1}{-2}{6}{8}
\TopBoundary{1}{-3}{6}{8}
\TopBoundary{1}{-4}{6}{8}
\TopBoundary{1}{-5}{5}{8}
\TopBoundary{2}{-1}{6}{8}
\TopBoundary{2}{-2}{5}{8}
\TopBoundary{2}{-3}{5}{8}
\TopBoundary{2}{-4}{4}{8}
\TopBoundary{3}{-1}{6}{8}
\TopBoundary{3}{-2}{3}{8}
\TopBoundary{3}{-3}{3}{8}
\TopBoundary{4}{-1}{5}{8}
\TopBoundary{4}{-2}{3}{8}
\TopBoundary{5}{-1}{4}{8}

\LeftBoundary{1}{-4}{6}{8}
\LeftBoundary{1}{-5}{5}{8}
\LeftBoundary{1}{-5}{4}{8}
\LeftBoundary{2}{-1}{6}{8}
\LeftBoundary{2}{-3}{5}{8}
\LeftBoundary{2}{-4}{4}{8}
\LeftBoundary{3}{-1}{6}{8}
\LeftBoundary{3}{-1}{5}{8}
\LeftBoundary{3}{-1}{4}{8}
\LeftBoundary{4}{0}{6}{8}
\LeftBoundary{4}{-1}{5}{8}
\LeftBoundary{4}{-1}{4}{8}
\LeftBoundary{5}{0}{6}{8}
\LeftBoundary{5}{0}{5}{8}
\LeftBoundary{5}{-1}{4}{8}

\RightBoundary{3}{-1}{6}{8}
\RightBoundary{4}{-1}{5}{8}
\RightBoundary{5}{-1}{4}{8}
\RightBoundary{1}{-2}{6}{8}
\RightBoundary{2}{-2}{5}{8}
\RightBoundary{2}{-2}{4}{8}
\RightBoundary{1}{-3}{6}{8}
\RightBoundary{2}{-3}{5}{8}
\RightBoundary{2}{-3}{4}{8}
\RightBoundary{1}{-4}{6}{8}
\RightBoundary{1}{-4}{5}{8}
\RightBoundary{2}{-4}{4}{8}
\RightBoundary{0}{-5}{6}{8}
\RightBoundary{1}{-5}{5}{8}
\RightBoundary{1}{-5}{4}{8}

\draw[line width=2pt] ({-5*cos(30)},{sin(30)}) -- ({-4*cos(30)},0) -- ({-3*cos(30)},{sin(30)}) -- ({-2*cos(30)},0) -- ({-1*cos(30)},{sin(30)}) -- (0,0) -- ({1*cos(30)},{sin(30)}) -- ({2*cos(30)},0) -- ({3*cos(30)},{sin(30)}) -- ({4*cos(30)},0) -- ({5*cos(30)},{sin(30)}) -- ({5*cos(30)},{3+sin(30)}) -- (0,{3+6*sin(30)}) -- ({-5*cos(30)},{3+sin(30)}) -- ({-5*cos(30)},{sin(30)}) -- ({-4*cos(30)},0);

\begin{scope}[xshift=6cm, yshift=1cm]

\node at (2,-1)[below] {$A_1$};
\node at (4,-2)[below] {$A_2$};
\node at (6,-3)[below] {$A_3$};
\node at (8,-4)[below] {$A_4$};
\node at (10,-5)[below] {$A_5$};
\node at (7,2)[right] {$E_1$};
\node at (8,1)[right] {$E_2$};
\node at (9,0)[right] {$E_3$};
\node at (10,-1)[right] {$E_4$};
\node at (11,-2)[right] {$E_5$};

\draw[cyan, line width=1.5pt] (2,-1) -- (2,1) -- (3,1) -- (3,2) -- (7,2);
\draw[cyan, line width=1.5pt] (4,-2) -- (4,-1) -- (5,-1) -- (5,0) -- (7,0) -- (7,1) -- (8,1);
\draw[cyan, line width=1.5pt] (6,-3) -- (8,-3) -- (8,0) -- (9,0);
\draw[cyan, line width=1.5pt] (8,-4) -- (9,-4) -- (9,-2) -- (10,-2) -- (10,-1);
\draw[cyan, line width=1.5pt]  (10,-5) -- (10,-4) -- (11,-4) -- (11,-2);
\foreach \i in {1,...,4}
{
	\foreach \j in {\i,...,8}
		\filldraw ({\j+3},{-\j+\i+2}) circle (2pt);
}
\foreach \i in {1,...,8}
	\filldraw ({\i+2},{3-\i}) circle (2pt);
\foreach \i in {1,...,7}
	\filldraw ({\i+1},{3-\i}) circle (2pt);
\foreach \i in {1,...,5}
	\filldraw ({\i+1},{2-\i}) circle (2pt);
\foreach \i in {1,...,3}
	\filldraw ({\i+1},{1-\i}) circle (2pt);
\filldraw (2,-1) circle (2pt);
\end{scope}
\end{tikzpicture}
\end{center}
\caption{\label{fig: QTCPP to paths} A QTCPP $\widehat{\pi}$ inside a $(5,5,6)$-box whose anti-diagonal entries are at least $3$ (left) and its corresponding lattice path configuration (right).}
\end{figure}

\section{Acknowledgement}

The author thanks Ilse Fischer and Christian Krattenthaler for helpful discussions.

\begin{appendix}
\section{Data from the computer experiments}
\label{sec: data}

\subsection{Quasi symmetric plane partitions}

The values for the number $\qspp(a,c)$ of quasi symmetric plane partitions inside an $(a,a,c)$-box with $a \leq 6$ and $c \leq 10$ are shown in the next table.
\begin{center}
\begin{tabular}{c|rr rr rr}
c \textbackslash\, a & 1 & 2 & 3 & 4 & 5 & 6\\
\hline
0 &  1  & 1    & 1     & 1        & 1          & 1\\
1 &  2  & 6    & 12    & 32       & 60         & 164\\
2 &  3  & 20   & 69    & 400      & 1312       & 7952\\
3 &  4  & 50   & 272   & 3052     & 16572      & 200956\\
4 &  5  & 105  & 846   & 16932    & 145428     & 3284589\\
5 &  6  & 196  & 2232  & 74868    & 979068     & 38963092\\
6 &  7  & 336  & 5214  & 278928   & 5376673    & 360346984\\
7 &  8  & 540  & 11088 & 908336   & 25100880   & 2727638524\\
8 &  9  & 825  & 21879 & 2653001  & 102593290  & 17499041992\\
9 &  10 & 1210 & 40612 & 7081776  & 375222392  & 97667820784 \\
10 & 11 & 1716 & 71643 & 17524416 & 1248707892 & 483901238656
\end{tabular}
\end{center}
Using the above values, we conjecture the following formulas for $\qspp(a,c)$ for $1 \leq a \leq 6$.
\begin{align*}
\qspp(1,c-1) &= \binom{c}{1} ,\\
\qspp(2,c-2) &= c \binom{c+1}{3} ,\\
\qspp(3,c-3) &= \binom{c+2}{5} \frac{c^2-2}{7},\\
\qspp(4,c-4) &= c \binom{c+3}{7} \frac{41 c^4-229 c^2-892}{23760},\\
\qspp(5,c-5) &= \binom{c+4}{9} \frac{683c^8 -8206c^6-14473 c^4-310644 c^2+756000}{122522400},\\
\qspp(6,c-6) &= c \binom{c+5}{11} \frac{1}{161911881331200}\left( 56381 c^{12}- 1850347 c^{10} + 
  11282865 c^8\right. \\
  & \left. - 28759181 c^6- 1859025278 c^4+ 20697349128 c^2 +194655992832\right)
.
\end{align*}

\subsection{Quasi transpose complementary plane partitions of second kind}

For the number $\qtcpp_2(a,c)$ of QTC2 plane partitions inside an $(a,a,c)$-box with $a \leq 6$ and $c\leq 10$, we have the following values.
\begin{center}
\begin{tabular}{c|rr rr rr}
c \textbackslash\, a & 1 & 2 & 3 & 4 & 5 & 6\\
\hline
0 & 1   & 1   & 1    & 1       & 1         & 1\\
1 & 2   & 4   & 7    & 24      & 62        & 216       \\
2 & 3   & 9   & 26   & 224     & 1210      & 12177     \\
3 & 4   & 16  & 70   & 1280    & 12819     & 314624    \\
4 & 5   & 25  & 155  & 5361    & 91694     & 4860048   \\
5 & 6   & 36  & 301  & 18088   & 496796    & 51955744  \\
6 & 7   & 49  & 532  & 52032   & 2185860   & 420545536 \\
7 & 8   & 64  & 876  & 132408  & 8177634   & 2735918368\\
8 & 9   & 81  & 1365 & 305745  & 26861407  & 14918043569\\
9 & 10  & 100 & 2035 & 652432  & 79299714  & --\\
10 & 11 & 121 & 2926 & 1304160 & 214133686 & --\\
\end{tabular}
\end{center}
We conjecture the following explicit formulas for $\qtcpp_2(a,c)$ for $1 \leq a \leq 6$.
\begin{align*}
\qtcpp_2(1,c-\frac{1}{2}) &= \left(c+\frac{1}{2}\right),\\
\qtcpp_2(2,c-1) &= c\binom{c}{1},\\
\qtcpp_2(3,c-\frac{1}{2}) &= \binom{c+\frac{1}{2}}{2} \frac{4c^2+3}{12},\\
\qtcpp_2(4,c-2) &= c\binom{c+1}{3}\frac{5c^4+19c^2-16}{280},\\
\qtcpp_2(5,c-\frac{1}{2}) &= \binom{c+\frac{3}{2}}{4}\frac{54016 c^8+ 522240 c^6- 1322016 c^4 + 80480 c^2 +883575}{159667200},\\
\qtcpp_2(6,c-3) &= c\binom{c+2}{5}\frac{1}{256505356800}\left(73325 c^{12} +1648357 c^{10} -12312285 c^8 \right.\\
&\left.+ 29029591 c^6 + 201378740 c^4- 876526848 c^2 + 395435520\right).
\end{align*}

\subsection{Quasi transpose complementary symmetric plane partitions of second kind}
Computer experiments yield the following values for the number $\qtcspp_2(a,c)$ of symmetric QTC2 plane partitions inside an $(a,a,c)$-box with $a \leq6$ and $c \leq 10$.
\begin{center}
\begin{tabular}{c|rr rr rr}
c \textbackslash\, a & 1 & 2 & 3 & 4 & 5 & 6\\
\hline
0 & 1   & 1   & 1   & 1     & 1      & 1\\
1 & 2   & 4   & 5   & 12    & 18     & 40     \\
2 & 3   & 9   & 14  & 68    & 142    & 625    \\
3 & 4   & 16  & 30  & 260   & 723    & 5728   \\
4 & 5   & 25  & 55  & 777   & 2782   & 36876  \\
5 & 6   & 36  & 91  & 1960  & 8796   & 184224 \\
6 & 7   & 49  & 140 & 4368  & 24036  & 759708 \\
7 & 8   & 64  & 204 & 8856  & 58674  & 2695200\\
8 & 9   & 81  & 285 & 16665 & 130911 & 8468889\\
9 & 10  & 100 & 385 & 29524 & 271414 & --\\
10 & 11 & 121 & 506 & 49764 & 529386 & --\\
\end{tabular}
\end{center}
For $a \leq 6$ we conjecture the following formulas for $\qtcspp_2(a,c)$.
\begin{align*}
\qtcspp_2(1,c)&= c+1,\\
\qtcspp_2(2,c-1)&= c\binom{c}{1},\\
\qtcspp_2(3,c-\frac{3}{2})&= \binom{c+\frac{1}{2}}{2}\frac{2c}{3},\\
\qtcspp_2(4,c-2)&= c\binom{c+1}{3}\frac{c^2+1}{10},\\
\qtcspp_2(5,c-\frac{5}{2})&= \binom{c+\frac{3}{2}}{4}\frac{144 c^4+248c^2-455}{6720},\\
\qtcspp_2(6,c-3)&= c\binom{c+2}{5}\frac{3c^6+15c^4-58c^2+200}{9240},\\
\end{align*}

\end{appendix}

\bibliographystyle{abbrvurl}
\bibliography{LiteraturListe}

\end{document}